\documentclass[fleqn,12pt]{olplainarticle}
\usepackage[all]{xy}
\usepackage{tikz-cd}
\usepackage[mathcal]{euscript}
\newtheorem{thm}{Theorem}[section]

\newtheorem{theorem}{Theorem}

\theoremstyle{definition}
\newtheorem{definition}[thm]{Definition}
\newtheorem{example}[thm]{Example}

\newtheorem{conj}[thm]{Conjecture}

\newtheorem{prop}[thm]{Proposition}
\newtheorem{cor}[thm]{Corollary}
\newtheorem{question}[thm]{Question}

\theoremstyle{remark}
\newtheorem{remark}[thm]{Remark}
\title{Noncommutative Quillen-Lichtenbaum Conjecture}

\author[$\dagger$]{Chunhui Wei}
\affil[$\dagger$]{The University of Melbourne\newline
chunhuiw2@student.unimelb.edu.au\newline chunhuiwei@mail.ustc.edu.cn}
\keywords{K-Theory, Category theory}

\begin{abstract}
We establish isomorphism ranges for the comparison maps between algebraic and topological K-groups, extending classical Quillen-Lichtenbaum conjecture to separated complex schemes of finite type after refinement. Additionally, we generalizes the conjecture through the lens of noncommutative geometry.
\end{abstract}

\begin{document}

\flushbottom
\maketitle
\thispagestyle{empty}
\tableofcontents
\section*{Introduction}
The Quillen-Lichtenbaum conjecture \cite[Theorem 4.1]{Pedrini_Weibel_2001} compares the algebraic K-theory and topological K-theory of smooth separated complex schemes of finite type. 
\begin{thm}[Quillen-Lichtenbaum conjecture]\quad\\
    Let $X$ is a smooth separated scheme over $\mathbb{C}$ of finite type and $m\geq 1$ be an integer. Then the canonical map
    \[
    K_n(X,\mathbb{Z}/m\mathbb{Z})\to KU^{-n}(X,\mathbb{Z}/m\mathbb{Z})
    \]
    is an isomorphism for $n\geq\dim X-1$ and a monomorphism for $n=\dim X-2$.
\end{thm}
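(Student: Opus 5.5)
The plan is to deduce the statement from the Beilinson--Lichtenbaum conjecture, which is now a theorem by Voevodsky--Rost (norm residue isomorphism), together with a comparison of the two Atiyah--Hirzebruch-type spectral sequences. On the algebraic side there is the motivic spectral sequence (Bloch--Lichtenbaum, Friedlander--Suslin, Levine)
\[
E_2^{p,q}=H^{p-q}_{\mathcal M}(X,\mathbb{Z}/m(-q))\Longrightarrow K_{-p-q}(X,\mathbb{Z}/m),
\]
valid for smooth $X$ over a field. On the topological side there is the Atiyah--Hirzebruch spectral sequence
\[
E_2^{p,q}=H^{p-q}(X(\mathbb{C}),\mathbb{Z}/m)\Longrightarrow KU^{p+q}(X,\mathbb{Z}/m),
\]
with the same indexing, with nonzero terms in even $q$. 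The comparison map $K_n(X,\mathbb{Z}/m)\to KU^{-n}(X,\mathbb{Z}/m)$ is induced by a map of spectral sequences. On $E_2$ this map is the composite of the cycle class map $H^{i}_{\mathcal M}(X,\mathbb{Z}/m(j))\to H^i_{et}(X,\mu_m^{\otimes j})$ with the Artin comparison isomorphism $H^i_{et}(X,\mu_m^{\otimes j})\cong H^i(X(\mathbb{C}),\mathbb{Z}/m)$. Here we identify $\mu_m^{\otimes j}\cong\mathbb{Z}/m$ using a choice of primitive root of unity in $\mathbb{C}$.

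First I establish that the spectral sequences are compatible. This is most cleanly done by realizing both as slice/Postnikov towers: the Betti realization functor sends the slice tower of $KGL/m$ to the Postnikov tower of $KU/m$ on $X(\mathbb{C})$. Second, I invoke Beilinson--Lichtenbaum. The map $H^i_{\mathcal M}(X,\mathbb{Z}/m(j))\to H^i_{et}(X,\mu_m^{\otimes j})$ is an isomorphism for $i\le j$ and injective for $i=j+1$. Third, on the étale/topological side, $H^i(X(\mathbb{C}),\mathbb{Z}/m)$ vanishes for $i>2d$ by dimension, where $d=\dim X$. Better still, since $X(\mathbb{C})$ is a Stein-type affine variety locally (Artin vanishing), the étale cohomological dimension is $d$ for affine $X$. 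In general it is $2d$, but the motivic groups $H^i_{\mathcal M}(X,\mathbb{Z}/m(j))$ vanish for $i>j+d$ by the Gersten/Bloch description.

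The heart of the argument is the bookkeeping in total degree. Write $j=-q$ and $i=p-q$; then the total degree is $n=-p-q=2j-i$. The $E_2$ map is an isomorphism when $i\le j$, i.e.\ $n\ge j$. It fails only for $i>j$. Since all relevant groups vanish for $i>2d$ (topologically) and for $i>j+d$ (motivically), I will check the following. For $n\ge d-1$, every term in the columns contributing to degrees $n$ and $n+1$ (the range that matters for the Zeeman comparison theorem) either has $i\le j$, or lies where both sides vanish, or lies in the edge range where the map is injective. Concretely, a bad term needs $j<i\le 2d$, so $n=2j-i<j\le$ roughly $i-1$. Combining this with the vanishing constraints pushes $n$ below $d-1$. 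The same count in degree $n=d-2$ leaves only terms on which the map is injective, giving a monomorphism.

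Finally, I apply the comparison theorem for spectral sequences. Both sequences converge strongly, because for $\mathbb{Z}/m$ coefficients and finite cohomological dimension the filtrations are finite. The comparison theorem upgrades the $E_2$ statements (isomorphism in degrees $\ge d-1$ together with the adjacent degree, and mono at the boundary) to the abutment.

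I expect the main obstacle to be this degree bookkeeping combined with the differentials. A differential can map an isomorphism range term into a term where the $E_2$ map is only injective, or not even that. Getting exactly the bound $n\ge\dim X-1$, rather than a weaker bound like $n\ge 2\dim X-1$, requires the sharper vanishing of the relevant cohomology above degree $d+j$ on both sides. On the topological side this uses that $X(\mathbb{C})$ has the homotopy type of a CW complex of dimension $\le 2d$, together with étale/Artin vanishing on the Zariski-local pieces. I would first try to prove the isomorphism on each $E_r$ page by induction on $r$ using a five-lemma argument on the truncated range.
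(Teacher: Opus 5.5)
The paper gives no argument of its own for this statement. It quotes it as a consequence of Bloch--Kato (Voevodsky) via Friedlander--Haesemeyer--Walker. Your outline (motivic spectral sequence mapped to the Atiyah--Hirzebruch spectral sequence, Beilinson--Lichtenbaum on $E_2$, comparison theorem) is the standard route behind that citation.

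However, the degree count does not go through with the inputs you list, and it is the only place where $\dim X$ enters. Suppose you only know that the $E_2$-map is bijective for $i\le j$ and injective for $i=j+1$, together with $H^i(X(\mathbb{C}),\mathbb{Z}/m)=0$ for $i>2d$ and $H^i_{\mathcal M}(X,\mathbb{Z}/m(j))=0$ for $i>j+d$. Then a bad term with $j<i\le 2d$ only forces $n=2j-i\le 2d-2$, so you get the weak range $n\ge 2d-1$.

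\begin{itemize}
\item For a smooth projective threefold, the term $(i,j)=(6,4)$ lies in degree $n=2=d-1$. It is not covered by Beilinson--Lichtenbaum as you state it, and neither vanishing applies: $H^6(X(\mathbb{C}),\mathbb{Z}/m)\neq 0$ and $6\le j+d$.
\item For a projective curve, degree $n=0=d-1$ contains $(2,1)$, where your inputs give only injectivity.
\item An $E_2$-term that is merely injective in a degree $\ge d-1$ yields only injectivity on the abutment. Such terms must be ruled out, not tolerated as your list allows.
\item The remedy in your last paragraph, vanishing above degree $d+j$ ``on both sides'', is false topologically: $H^{2d}(X(\mathbb{C}),\mathbb{Z}/m)\neq 0$ for projective $X$.
\end{itemize}

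The missing input is that in weights $j\ge d$ the cycle map $H^i_{\mathcal M}(X,\mathbb{Z}/m(j))\to H^i_{et}(X,\mu_m^{\otimes j})$ is an isomorphism for every $i$. Use Beilinson--Lichtenbaum in the form $\mathbb{Z}/m(j)\simeq\tau_{\le j}R\epsilon_*\mu_m^{\otimes j}$, where $\epsilon$ is the change of topology from \'etale to Zariski. The Zariski cohomology sheaves $\mathcal{H}^k$ of $R\epsilon_*\mu_m^{\otimes j}$ vanish for $k>d$ by Artin vanishing on affine opens. So for $j\ge d$ the truncation does nothing. This is where your Artin vanishing belongs: at the level of Zariski sheaves, not global cohomology.

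With this input the count works:
\begin{itemize}
\item A term on which the $E_2$-map is not bijective has $j\le d-1$ and $i\ge j+1$, hence $n=2j-i\le j-1\le d-2$.
\item A term on which it is not injective has $i\ge j+2$, hence $n\le d-3$.
\end{itemize}
So the $E_2$-map is an isomorphism in all total degrees $\ge d-1$ and injective in degree $d-2$. Differentials lower total degree by one, so this persists to $E_\infty$ by induction on $r$. The finite filtrations then give exactly the stated range. With this replacement your argument is complete.
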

The Quillen-Lichtenbaum Conjecture is now a consequence of the Bloch-Kato conjecture (Readers can see \cite{Friedlander_Haesemeyer_Walker_2004} for details), which has been proved by Voevodsky \cite{Voevodsky_2008}. Hence, it is already a theorem.

Our main result is:
\begin{theorem}\label{mainresult}
    Let $X$ is a separated scheme over $\mathbb{C}$ of finite type and $m\geq 1$ be an integer. Then the canonical map
    \[
    K_n(\mathrm{coh}(X),\mathbb{Z}/m\mathbb{Z})\to K^{\mathrm{top}}_n(\mathrm{coh}(X),\mathbb{Z}/m\mathbb{Z})
    \]
    is an isomorphism for $n\geq\dim X-1$ and a monomorphism for $n=\dim X-2$.
\end{theorem}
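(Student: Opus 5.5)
We reduce to the smooth case (the classical Quillen--Lichtenbaum conjecture stated above) by combining three ingredients: Noetherian induction on $X$ via localization, resolution of singularities, and the known comparison between topological K-theory of $\mathrm{coh}(X)$ (in the sense of Blanc) and Borel--Moore K-homology of $X(\mathbb{C})$. Throughout we work with $G$-theory $G(X)=K(\mathrm{coh}(X))$, and on the topological side with $K^{\mathrm{top}}(\mathrm{coh}(X))\simeq KU^{BM}(X(\mathbb{C}))$, which Halpern-Leistner--Pomerleano and Blanc establish for separated schemes of finite type over $\mathbb{C}$. The comparison map is a natural transformation $G(X)/m\to KU^{BM}(X)/m$. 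When $X$ is smooth, Poincaré duality identifies $G(X)\simeq K(X)$ and $KU^{BM}_n(X)\simeq KU^{-n}(X)$, and the map becomes the classical one. The smooth case is therefore exactly the Quillen--Lichtenbaum conjecture.

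\textbf{Localization.} For a closed subscheme $Z\subset X$ with open complement $U$, Quillen's dévissage and localization give a fiber sequence $G(Z)\to G(X)\to G(U)$. Blanc's $K^{\mathrm{top}}$ is a localizing invariant, so the Verdier sequence $\mathrm{coh}_Z(X)\to\mathrm{coh}(X)\to\mathrm{coh}(U)$, together with dévissage for $K^{\mathrm{top}}$ (or the Borel--Moore localization sequence), gives a compatible topological fiber sequence. Reducing mod $m$, we obtain a map of long exact sequences.

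\textbf{Induction on $\dim X$.} We may replace $X$ by $X_{\mathrm{red}}$, since both theories are nil-invariant on $\mathrm{coh}$. Over $\mathbb{C}$, the smooth locus $U$ of $X_{\mathrm{red}}$ is dense open, and $Z=X\setminus U$ has $\dim Z\le\dim X-1$. By the smooth case, applied to each component of $U$ (of dimension at most $d=\dim X$), the map for $U$ is an isomorphism for $n\ge d-1$ and injective for $n=d-2$. By induction, the map for $Z$ is an isomorphism for $n\ge d-2$ and injective for $n=d-3$. Write $\phi_n^{Y}$ for the mod-$m$ comparison map in degree $n$ for $Y=Z,X,U$. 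The relevant ladder is
\[
\phi^Z_n\to\phi^X_n\to\phi^U_n\to\phi^Z_{n-1}\to\phi^X_{n-1}.
\]

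\begin{itemize}
\item For $n\ge d-1$: $\phi^Z_n$ and $\phi^Z_{n-1}$ are isomorphisms (since $n-1\ge d-2$), and $\phi^U_n$ is an isomorphism. The five lemma still needs surjectivity of $\phi^U_{n+1}$ on the left, which holds. It also needs injectivity of the map four positions to the right, which is $\phi^Z_{n-1}$, and that holds. Hence $\phi^X_n$ is an isomorphism.
\item For $n=d-2$: the four-lemma for monomorphisms needs $\phi^Z_n$ injective, $\phi^U_n$ injective, and $\phi^U_{n+1}$ surjective. All three hold ($n+1=d-1$), so $\phi^X_n$ is injective.
\end{itemize}

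The base case $\dim X=0$ is the smooth case.

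\textbf{Main obstacle.} The delicate step is the comparison itself: identifying $K^{\mathrm{top}}(\mathrm{coh}(X))$ with Borel--Moore K-homology, and checking that the localization triangles and the comparison maps are compatible, i.e.\ that the topological realization of the algebraic localization sequence is the Borel--Moore localization sequence. Our first attempt would use that Blanc's construction is a localizing invariant receiving a natural map from $K$, so compatibility follows formally from functoriality of the Verdier sequence. We would then need $K^{\mathrm{top}}$ dévissage, namely $K^{\mathrm{top}}(\mathrm{coh}_Z(X))\simeq K^{\mathrm{top}}(\mathrm{coh}(Z))$, which we would derive from the Borel--Moore identification, or from Halpern-Leistner--Pomerleano's lattice conjecture results for $\mathrm{coh}$. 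A minor additional point is the identification in the smooth case with the classical map to $KU^{-n}$, which uses $\mathrm{Perf}=\mathrm{coh}$ and Blanc's theorem $K^{\mathrm{top}}(\mathrm{Perf}(X))\simeq KU(X(\mathbb{C}))$.
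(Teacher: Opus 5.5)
Your argument is correct, but it is organized differently from the paper's.

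The paper's route:
\begin{enumerate}
\item It works with $K^{\mathrm{st}}$ and uses $t$-invariance of $K^{\mathrm{st}}/m$ and $K^{\mathrm{top}}/m$ to build Brown--Gersten--Quillen coniveau spectral sequences. Their $E_1$-terms are indexed by the residue fields $\kappa(x)$.
\item It proves the comparison for finitely generated fields by writing $\kappa(x)$ as a filtered colimit of coordinate rings of smooth affine opens and invoking the smooth case.
\item It concludes by comparing the spectral sequences, and transfers the result to $K$ via $K/m\simeq K^{\mathrm{st}}/m$.
\end{enumerate}

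You instead split $X$ geometrically into the smooth locus $U$ of $X_{\mathrm{red}}$ and its closed complement $Z$, and induct on dimension using one localization sequence and the five lemma. Your bookkeeping is right: the five maps around $\phi^X_n$ are $\phi^U_{n+1},\phi^Z_n,\phi^X_n,\phi^U_n,\phi^Z_{n-1}$, and the ranges hold even when $U$ and $Z$ are not equidimensional.

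Your route is more elementary in three ways:
\begin{enumerate}
\item It avoids function fields, and hence the passage of $K^{\mathrm{top}}$ through the colimit $F=\operatorname{colim}\mathcal{O}(U_\alpha)$.
\item It replaces a comparison of spectral sequences that agree only in a range by an explicit five-lemma step. The paper calls that comparison standard, but it requires tracking differentials and the abutment filtration.
\item It handles non-equidimensional $X$ automatically. In the paper, the equality $\operatorname{trdeg}_{\mathbb{C}}\kappa(x)=d-p$ is in general only an inequality, which is harmless by monotonicity of the ranges.
\end{enumerate}
The paper's route, in turn, isolates fields as the only input. That is what lets it show that the field version of the semi-topological conjecture, plus $t$-invariance of $K^{\mathrm{st}}$, implies the scheme version.

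Finally, the topological input you need is the same as the paper's and lighter than what you invoke. You do not need the integral identification with Borel--Moore $K$-homology. You only need two facts:
\begin{enumerate}
\item $K^{\mathrm{top}}$ is localizing.
\item The functors $D^b(\mathrm{coh}(Z))\to D^b_Z(\mathrm{coh}(X))$ and $D^b(\mathrm{coh}(X_{\mathrm{red}}))\to D^b(\mathrm{coh}(X))$ induce equivalences on $K^{\mathrm{top}}/m$.
\end{enumerate}
The second fact holds because Quillen's d\'evissage, together with the theorem of the heart, gives an equivalence on $K/m\simeq K^{\mathrm{st}}/m$. Hence it gives one on the Bott-inverted $K^{\mathrm{top}}/m$ as well. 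This is exactly the special case of the paper's $t$-invariance of $K^{\mathrm{top}}/m$ that your argument uses.
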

\begin{remark}
    Here, $K(\mathrm{coh}(X))$ is Quillen's algebraic K-theory \cite{Quillen_1973} of abelian category $\mathrm{coh}(X)$ or Blumberg-Gepner-Tabuada's algebraic K-theory \cite{blumberg_gepner_tabuada_2013} of stable $\infty$-category $D^b(\mathrm{coh}(X))$. It is so-called the $K'$-theory or $G$-theory of scheme $X$. $K^\mathrm{top}(\mathrm{coh}(X))$ is Blanc's topological K-theory \cite{blanc_2015} of complex dg category $D^b(\mathrm{coh}(X))$.
\end{remark}
\begin{remark}
    By \cite{blanc_2015}, when $X$ is smooth and of finite type, this theorem is the same as Quillen-Lichtenbaum conjecture.
\end{remark}
\begin{conj}[Semi-topological Quillen-Lichtenbaum conjecture]\quad\\
    Let $X$ is a smooth separated scheme over $\mathbb{C}$ of finite type and $A$ be an abelian group. Then the canonical map
    \[
    K^\mathrm{sst}_n(X,A)\to KU^{-n}(X,A)
    \]
    is an isomorphism for $n\geq\dim X-1$ and a monomorphism for $n=\dim X-2$.
\end{conj}
This motives us to give a conjecture:
\begin{conj}
Let $X$ is a separated scheme over $\mathbb{C}$ of finite type and $A$ be an abelian group. Then the canonical map
    \[
    K^\mathrm{st}_n(\mathrm{coh}(X),A)\to K^{\mathrm{top}}_n(\mathrm{coh}(X),A)
    \]
    is an isomorphism for $n\geq\dim X-1$ and a monomorphism for $n=\dim X-2$.
\end{conj}
\begin{remark}
    $K^\mathrm{st}(\mathrm{coh}(X))$ is Blanc's semi-topological K-theory \cite{blanc_2015} of complex dg category $D^b(\mathrm{coh}(X))$.
\end{remark}
Furthermore, we can consider general $\mathbb{C}$-linear stable $\infty$-categories.
\begin{definition}
    Let $\mathcal{A}$ be a $\mathbb{C}$-linear stable $\infty$-category. Define the \emph{Quillen-Lichtenbaum dimension} by the minimal integer $d\geq 0$ such that for all $m\geq 1$, $K_n(\mathcal{A},\mathbb{Z}/m\mathbb{Z})\to K^\mathrm{top}_n(\mathcal{A},\mathbb{Z}/m\mathbb{Z})$ is an isomorphism for $n\geq d-1$ and a monomorphism for $n=d-2$. Denote it by $\dim_\mathrm{QL}\mathcal{A}$.
\end{definition}
\begin{conj}
    Let $\mathcal{A}$ be a smooth $\mathbb{C}$-linear stable $\infty$-category. Then $\dim_\mathrm{QL}\mathcal{A}<\infty$.
\end{conj}
\section*{Acknowledgments}
I would like to express my sincere gratitude to everyone who supported me throughout the process of completing this article. First and foremost, I am deeply thankful to my Master's supervisor, Professor Christian Haesemeyer, for his invaluable guidance, encouragement, and insightful feedback. Furthermore, I'm very grateful for the support from colleagues at  Institute for Advanced Study in Mathematics, Zhejiang University during my visiting.
\section{$k$-linear stable $\infty$-categories}
Denote by $\mathrm{Cat}^\mathrm{perf}$, the $\infty$-category of small, stable, and idempotent complete $\infty$-categories and exact functors. From \cite[Section 3.1]{blumberg_gepner_tabuada_2013}, $\mathrm{Cat}^\mathrm{perf}$ admits (closed) symmetric monoidal structure.   
\begin{definition}
     Let $k$ be a field. Then $\mathrm{Perf}(k)\in\mathrm{CAlg}^\mathrm{rig}(\mathrm{Cat}^\mathrm{perf})$ is a rigid commutative algebra. We set $\mathrm{Cat}^\mathrm{perf}_k:=\operatorname{Mod}_{\mathrm{Perf}(k)}(\mathrm{Cat}^\mathrm{perf})$.
\end{definition}
\begin{remark}
    For the definition of commutative algebra and its module, refer to \cite[Section 4.5]{lurie_2017}. For the definition of rigidness, refer to \cite[section 4]{Hoyois_2017}.
\end{remark}
$\mathrm{Cat}^\mathrm{perf}_k$ also admits (closed) symmetric monoidal structure, denoted by $\otimes_k$.  
\begin{definition}\cite[Definition 5.12, Proposition 5.15]{blumberg_gepner_tabuada_2013}\quad\\
     An \textit{exact sequence} of small stable idempotent-complete $\infty$-categories $\mathcal{A}\to\mathcal{B}\to\mathcal{C}$ is a sequence where the composite morphism is zero, the functor $\mathcal{A} \to \mathcal{B}$ is fully faithful, and the morphism from $\mathcal{B}/\mathcal{A}$ to $\mathcal{C}$ constitutes an equivalence after idempotent-completion. The map $\mathcal{B}\to\mathcal{C}$ is called a \emph{Verdier projection}.
\end{definition}
\begin{definition}\cite[Definition 5.18]{blumberg_gepner_tabuada_2013}\quad\\
    A \textit{split-exact} sequence of small stable $\infty$-categories is an exact sequence $\mathcal{A}\xrightarrow{f}\mathcal{B}\xrightarrow{g}\mathcal{C}$ with exact functors $i:\mathcal{B}\to\mathcal{A}$ and $j:\mathcal{C}\to\mathcal{B}$, which serve as right adjoints to $f$ and $g$, respectively, such that $i\circ f\simeq\mathrm{Id}$ and $g\circ j\simeq\mathrm{Id}$.
\end{definition}
\begin{definition}\cite[Definition 5.3]{Hoyois_2017}\quad\\
A sequence
\[
\mathcal{A} \xrightarrow{f} \mathcal{B} \xrightarrow{g} \mathcal{C}
\]
in $\mathrm{Cat}^\mathrm{perf}_k$ is called \emph{exact} (resp. \emph{split exact}) if its image by the forgetful functor $\mathrm{Cat}^\mathrm{perf}_k \to \mathrm{Cat}^\mathrm{perf}$ is exact (resp. split exact).
\end{definition}
\begin{definition}\cite[Definition 5.11]{Hoyois_2017}
    Let $\mathcal{X}$ be a stable presentable $\infty$-category and let $\Theta: \mathrm{Cat}^\mathrm{perf}_k \to \mathcal{X}$ be a functor. We say that $\Theta$ is an \emph{additive invariant} if the following conditions are satisfied:
\begin{enumerate}
\item $\Theta$ preserves filtered colimits.
\item $\Theta$ preserves zero objects.
\item $\Theta$ sends split exact sequences in $\mathrm{Cat}^\mathrm{perf}_k$ to cofiber sequences in $\mathcal{X}$.
\end{enumerate}
\end{definition}
\begin{definition}\cite[Definition 5.16]{Hoyois_2017}
    Let $\mathcal{X}$ be a stable presentable $\infty$-category and let $\Theta: \mathrm{Cat}^\mathrm{perf}_k\to \mathcal{X}$ be a functor. We say that $\Theta$ is an \emph{localizing invariant} if the following conditions are satisfied:
\begin{enumerate}
\item $\Theta$ preserves filtered colimits.
\item $\Theta$ preserves zero objects.
\item $\Theta$ sends exact sequences in $\mathrm{Cat}^\mathrm{perf}_k$ to cofiber sequences in $\mathcal{D}$.
\end{enumerate}
\end{definition}
\begin{definition}
    Let $\Theta: \mathrm{Cat}^\mathrm{perf}_k \to \mathcal{X}$ be a functor.
    \begin{itemize}
        \item Let $R$ be a $k$-algebra, denote by $\Theta(A):=\Theta(\mathrm{Perf}(R))$.
        \item Let $X$ be a $k$-scheme, denote by $\Theta(A):=\Theta(\mathrm{Perf}(X))$.
        \item Let $A$ be a $k$-linear abelian category, denote by $\Theta(A):=\Theta(D^b(A))$.
    \end{itemize}
\end{definition}
\begin{example}
    The algebraic $K$-theory $K:\mathrm{Cat}^\mathrm{perf}_k\to\mathrm{Sp}$ and Weibo's homotopy $K$-theory $KH:\mathrm{Cat}^\mathrm{perf}_k\to\mathrm{Sp}$ are two localizing invariants.
\end{example}
\section{Topological K-theory of $\mathbb{C}$-linear stable $\infty$-categories}
Denote by $\mathrm{Aff}_\mathbb{C}$ the category of affine $\mathbb{C}$-schemes of finite type. The topological realization $\mathrm{Re}:\mathcal{P}_{\mathrm{Sp}}(\mathrm{Aff}_\mathbb{C})\to\mathrm{Sp}$ is defined as the left Kan extension \cite{Antieau_Heller_2017}
\[
\begin{tikzcd}
\mathrm{Aff}_\mathbb{C}
\arrow[rr, "X \mapsto \Sigma^\infty \mathrm{Sing} X^{\mathrm{an}}_+"]
\arrow[d]
&& \mathrm{Sp} \\
\mathcal{P}_{\mathrm{Sp}}(\mathrm{Aff}_\mathbb{C})
\arrow[rru, dashed, "{\mathrm{Re}}"]
&
\end{tikzcd},
\]
where the left vertical functor $\mathrm{Aff}_\mathbb{C}\to\mathcal{P}_{\mathrm{Sp}}(\mathrm{Aff}_\mathbb{C})$ is the spectral Yoneda functor and $\mathcal{P}_{\mathrm{Sp}}(\mathrm{Aff}_\mathbb{C})$ is the stable presentable $\infty$-category of presheaves of spectra on $\mathrm{Aff}_\mathbb{C}$. $\mathrm{Re}$ can be described as the following composition \cite{Antieau_Heller_2017}
\[
\mathcal{P}_{\mathrm{Sp}}(\mathrm{Aff}_\mathbb{C})\to\mathcal{P}_{\mathrm{Sp}}(\mathrm{Top})\to\mathcal{P}_{\mathrm{Sp}}(\Delta)\to\mathrm{Sp}.
\]
It means, for a presheaf $F:\mathrm{Aff}_\mathbb{C}\to\mathrm{Sp}$, we have
\[
\mathrm{Re}(F)\simeq|[n] \mapsto \mathop{\mathrm{colim}}\limits_{\Delta^n \to (\mathrm{Spec} R)^{\text{an}}\in(\cdot)^{\text{an}} / \Delta_{\mathrm{top}}^n} F(R)
|.
\]
Denote $\mathrm{Sm}_\mathbb{C}$ the subcategory of $\mathrm{Aff}_\mathbb{C}$ containing smooth affine $\mathbb{C}$-schemes of finite type. Then, we can also define a realization functor $\mathrm{Re}^{\mathrm{sm}}:\mathcal{P}_{\mathrm{Sp}}(\mathrm{Sm}_\mathbb{C})\to\mathrm{Sp}$. For a presheaf $F:\mathrm{Aff}_\mathbb{C}\to\mathrm{Sp}$, we have $\mathrm{Re}(F)\simeq\mathrm{Re}^{\mathrm{sm}}(F|_{\mathrm{Sm}_\mathbb{C}})$ \cite[Theorem 3.18]{blanc_2015}.

\begin{definition}
For a functor $F:\mathrm{Cat}^\mathrm{perf}_\mathbb{C}\to\mathrm{Sp}$, define a functor $\underline{F}:\mathrm{Cat}^\mathrm{perf}_\mathbb{C}\to\mathcal{P}_\mathrm{Sp}(\mathrm{Aff}_\mathbb{C})$
\[
\underline{F}(\mathcal{A})(\mathrm{Spec} R):=F(\mathcal{A}\otimes_{\mathbb{C}}\mathrm{Perf}(R)).
\]
And, define a functor $F^\mathrm{st}:\mathrm{Cat}^\mathrm{perf}_\mathbb{C}\to\mathrm{Sp}$ by
\[
F^\mathrm{st}(\mathcal{A}):=\mathrm{Re}(\underline{F}(\mathcal{A})).
\]
\end{definition}
For a $\mathbb{C}$-linear stable $\infty$-category $\mathcal{A}$, define presheaves of spectra as follows
    \[
        \underline{K}(\mathcal{A}):\mathrm{Aff}^{\mathrm{op}}_{\mathbb{C}}\to\mathrm{Sp},\mathrm{Spec} R\mapsto K(\mathcal{A}\otimes_{\mathbb{C}}\mathrm{Perf}(R)),
        \]
    
Then the \textit{semi-topological K-theory} of a $\mathbb{C}$-linear stable $\infty$-category $\mathcal{A}\in\mathrm{Cat}^\mathrm{perf}_{\mathbb{C}}$ is defined as follows
    \[
    K^{\mathrm{st}}(\mathcal{A}):=\mathrm{Re}(\underline{K}(\mathcal{A})).
    \]
These define a localizing invariant:
    \[
    K^{\mathrm{st}}:\mathrm{Cat}^\mathrm{perf}_{\mathbb{C}}\to\mathrm{Sp}.
    \]
Hence,
    \[
    K^\mathrm{st}(\mathcal{A}):=|[n]\mapsto\mathop{\mathrm{colim}}\limits_{\Delta^n \to (\mathrm{Spec} R)^{\text{an}}\in(\cdot)^{\text{an}} / \Delta_{\mathrm{top}}^n} K(\mathcal{A}\otimes_{\mathbb{C}}\mathrm{Perf}(R))|.
    \]
By \cite[Lemma 3.1]{Antieau_Heller_2017}, we also know
    \[
    K^\mathrm{st}(\mathcal{A}):=|[n]\mapsto\mathop{\mathrm{colim}}\limits_{\Delta^n \to (\mathrm{Spec} R)^{\text{an}}\in(\cdot)^{\text{an}} / \Delta_{\mathrm{top}}^n} KH(\mathcal{A}\otimes_{\mathbb{C}}\mathrm{Perf}(R))|.
    \]
\begin{thm}\cite{Antieau_Heller_2017}
    The functor $K^{\mathrm{st}}:\mathrm{Cat}^\mathrm{perf}_{\mathbb{C}}\to\mathrm{Sp}$ satisfies the following properties.
    \begin{enumerate}
        \item For a smooth separated $\mathbb{C}$-scheme $X$ of finite type, there exists a functorial isomorphism
    \[
    K^\mathrm{st}(\mathrm{Perf}(X))\simeq K^\mathrm{sst}(X).
    \]
        \item $K^{\mathrm{st}}/n\simeq K/n\simeq KH/n$ for $n\in\mathbb{Z}_{\geq 1}$.
    \end{enumerate}
\end{thm}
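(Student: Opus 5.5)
The theorem just stated, attributed to Antieau--Heller, has two parts, and I would prove them separately.

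\textbf{Part (1).} The plan is to compare both sides as realizations of presheaves of spectra on $\mathrm{Aff}_\mathbb{C}$. Friedlander--Walker's semi-topological $K$-theory $K^\mathrm{sst}(X)$ can be written as the realization of the presheaf $\mathrm{Spec} R\mapsto K(X\times\mathrm{Spec} R)$. This uses the construction through the mapping spaces of $X$ into Grassmannians, together with their identification with $K$-theory via the algebraic-topological comparison of Friedlander--Walker.

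For $X$ smooth separated of finite type, the Künneth-type equivalence $\mathrm{Perf}(X)\otimes_\mathbb{C}\mathrm{Perf}(R)\simeq\mathrm{Perf}(X\times_\mathbb{C}\mathrm{Spec} R)$ (Ben-Zvi--Francis--Nadler, or the perfect-module version) identifies $\underline{K}(\mathrm{Perf}(X))$ with this presheaf. Applying $\mathrm{Re}$ then gives the isomorphism. Functoriality is automatic because all identifications are natural in $X$.

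\textbf{Main obstacle.} The hardest step is matching $K^\mathrm{sst}$, as originally defined, with the realization $\mathrm{Re}$ of the $K$-theory presheaf. I would first reduce to the smooth site using $\mathrm{Re}(F)\simeq\mathrm{Re}^\mathrm{sm}(F|_{\mathrm{Sm}_\mathbb{C}})$, which was quoted above. I would then invoke Friedlander--Walker's theorem that for smooth targets the semi-topological construction agrees with the singular realization of $K(X\times -)$.

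\textbf{Part (2).} This is a rigidity statement. First, since $\mathrm{Re}$ is a colimit-preserving exact functor, it commutes with $-/n$, so $K^\mathrm{st}/n\simeq\mathrm{Re}(\underline{K}(\mathcal{A})/n)$.

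Next, $\underline{K}/n$ and $\underline{KH}/n$ agree. The equivalence $K/n\simeq KH/n$ holds over $\mathbb{C}$-algebras because the fiber of $K\to KH$ is rational in characteristic zero, by Weibel/Cortiñas--Haesemeyer--Schlichting--Weibel. This applies to $\mathcal{A}\otimes\mathrm{Perf}(R)$ via the usual nil-invariance argument for localizing invariants (Tabuada's $\mathbb{A}^1$-homotopy invariance). So we may replace $K$ by $KH$.

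The presheaf $\underline{KH}(\mathcal{A})/n$ is $\mathbb{A}^1$-invariant and satisfies Nisnevich (indeed cdh) descent. By Suslin--Gabber rigidity, $KH/n$ of henselian local $\mathbb{C}$-algebras equals that of $\mathbb{C}$. The argument would show that $\mathrm{Re}$ of a rigid, $\mathbb{A}^1$-invariant presheaf $F$ is $F(\mathbb{C})$. Concretely, each colimit term $\mathrm{colim}_{\Delta^n\to(\mathrm{Spec} R)^\mathrm{an}}F(R)$ is computed by stalks at points, which are henselizations. These stalks are constant equal to $F(\mathbb{C})$, so the simplicial object is constant and its realization is $F(\mathbb{C})=KH(\mathcal{A})/n\simeq K(\mathcal{A})/n$.

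The delicate point is that rigidity for $\mathcal{A}\otimes\mathrm{Perf}(R)$ for a general $\mathcal{A}$ is needed, not just for commutative rings. I would handle this with the Suslin/Gabber argument adapted to $\mathbb{A}^1$-invariant localizing invariants with finite coefficients: use étale descent for $KH/n$ and Gabber's affine analog of proper base change, following Antieau--Heller's treatment.
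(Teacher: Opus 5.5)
The paper gives no argument for this statement; it only cites Antieau--Heller. Your Part (1) is the standard argument: K\"unneth for $\mathrm{Perf}$, then restriction to $\mathrm{Sm}_\mathbb{C}$ so that $X\times\operatorname{Spec}R$ is smooth and nonconnective $K$ agrees with the vector-bundle $K$-theory Friedlander--Walker use, then their singular description of $K^{\mathrm{sst}}$. In Part (2), commuting $\mathrm{Re}$ with $-/n$ is fine, and so is $K/n\simeq KH/n$ via Tabuada's dg version of Weibel's theorem.

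The gap is the step that turns rigidity into a statement about $\mathrm{Re}$. The term $\mathrm{colim}_{\Delta^n\to(\operatorname{Spec}R)^{\mathrm{an}}}F(R)$ is a colimit over algebraic neighbourhoods of a whole topological simplex, not over \'etale neighbourhoods of a point. It is not a stalk and is not computed by henselizations.
\begin{itemize}
\item Pointwise rigidity only gives the following: a class $a\in F(X)$ with $\sigma(v)^*a=0$ dies on some \'etale neighbourhood of each point of $\sigma(\Delta^n)$. But $\sigma$ need not lift to any single one of these neighbourhoods, so nothing makes the simplicial object degreewise constant.
\item Nisnevich descent does not rescue this, because Nisnevich stalks at non-closed points are not $F(\mathbb{C})$. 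For $\mathcal{A}=\mathrm{Perf}(\mathbb{C})$, the stalk at a generic point has $\pi_1\supseteq\kappa(x)^\times/n\neq 0$, while $K_1(\mathbb{C};\mathbb{Z}/n)=0$.
\end{itemize}
The missing ingredient is that $\mathrm{Re}$ sends \'etale hypercovers to equivalences (Dugger--Isaksen), so $\mathrm{Re}(F)\simeq\mathrm{Re}(L^{\mathrm{hyp}}_{\mathrm{et}}F)$. Over $\mathbb{C}$, closed points are conservative for \'etale sheaves on finite type schemes, since an \'etale map hitting every closed point is surjective. The stalk at a closed point is $F(\mathcal{O}^h_{X,x})$. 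Rigidity then makes $\mathrm{const}\,F(\mathbb{C})\to F$ an \'etale-local equivalence, and hence $\mathrm{Re}(F)\simeq\mathrm{Re}(\mathrm{const}\,F(\mathbb{C}))\simeq F(\mathbb{C})$. Alternatively, run Suslin's rigidity-along-curves argument on the realization, as Friedlander--Walker do.

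Your proposed source of rigidity for a general $\mathcal{A}$, namely `\'etale descent for $KH/n$' plus Gabber's affine proper base change, does not work. $KH/n$ does \emph{not} satisfy \'etale descent; its failure in low degrees is exactly what the Quillen--Lichtenbaum range measures. So Gabber's \'etale-cohomological base change cannot be transported to it. Rigidity of $R\mapsto KH(\mathcal{A}\otimes_\mathbb{C}\mathrm{Perf}(R))/n$ at henselian local rings with residue field $\mathbb{C}$ should instead come from the Suslin--Gabber rigidity theorem for $\mathbb{A}^1$-invariant, $n$-torsion presheaves with transfers, applied to each $\pi_i(\underline{KH}(\mathcal{A})/n)$. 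The transfers are the pushforwards $\mathcal{A}\otimes\mathrm{Perf}(R')\to\mathcal{A}\otimes\mathrm{Perf}(R)$ along finite flat maps $R\to R'$.
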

Let $\beta\in\pi_2\mathrm{bu}$ be the Bott element. Then the \textit{topological K-theory} of a $\mathbb{C}$-linear stable $\infty$-category $\mathcal{A}\in\mathrm{Cat}^\mathrm{perf}_{\mathbb{C}}$ is defined as:
    \[K^{\mathrm{top}}(\mathcal{A}):=K^{\mathrm{st}}(\mathcal{A})[\beta^{-1}]\]
This defines a functor
    \[K^{\mathrm{top}}:\mathrm{Cat}^\mathrm{perf}_{\mathbb{C}}\to\mathrm{Sp}.\]
\begin{thm}\cite{blanc_2015}
    The functor $K^{\mathrm{top}}:\mathrm{Cat}^\mathrm{perf}_{\mathbb{C}}\to\mathrm{Sp}$ satisfies the following properties.
    \begin{enumerate}
        \item $K^{\mathrm{top}}(\mathrm{Perf}(\mathbb{C}))\simeq\mathrm{KU}$ in the homotopy category of $\mathrm{Sp}$.
        \item For a separated $\mathbb{C}$-scheme $X$ of finite type, there exists a functorial isomorphism
        \[
        K^{\mathrm{top}}(\mathrm{Perf}(X))\simeq KU(X^{\mathrm{an}}).
        \]
        \item $K^{\mathrm{top}}$ is a localizing invariant.
    \end{enumerate}
\end{thm}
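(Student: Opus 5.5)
The plan is to prove the three properties in the order (3), (1), (2): localization is purely formal, the point case is classical input, and the scheme case reduces to smooth affines by descent.

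\emph{Localizing invariance (3).} For fixed $R$, the functor $-\otimes_{\mathbb{C}}\mathrm{Perf}(R)$ preserves exact sequences and filtered colimits in $\mathrm{Cat}^\mathrm{perf}_\mathbb{C}$, since $\mathrm{Perf}(R)$ is dualizable and $\otimes_\mathbb{C}$ commutes with colimits in each variable. Composing with the localizing invariant $K$ shows that $\underline{K}$ sends exact sequences to objectwise cofiber sequences of presheaves, and preserves filtered colimits objectwise. Being a left Kan extension, $\mathrm{Re}$ is an exact functor of stable $\infty$-categories that preserves all colimits. Hence $K^\mathrm{st}$ is a localizing invariant.

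Next, $K^\mathrm{st}(\mathcal{A})$ is a module over $K^\mathrm{st}(\mathrm{Perf}(\mathbb{C}))$. This uses the lax monoidal structure of $\underline{K}$ together with the symmetric monoidality of $\mathrm{Re}$, which holds because $\mathrm{Sing}(-)^\mathrm{an}_+$ sends products to smash products. So once (1) identifies $K^\mathrm{st}(\mathrm{Perf}(\mathbb{C}))$ with $\mathrm{ku}$, the element $\beta$ acts on every $K^\mathrm{st}(\mathcal{A})$. Then $K^\mathrm{top}(\mathcal{A})=\mathrm{colim}(K^\mathrm{st}(\mathcal{A})\xrightarrow{\beta}\Sigma^{-2}K^\mathrm{st}(\mathcal{A})\xrightarrow{\beta}\cdots)$ is a sequential colimit. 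Such colimits commute with cofiber sequences and filtered colimits, so $K^\mathrm{top}$ is also localizing.

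\emph{The point (1).} Here $K^\mathrm{st}(\mathrm{Perf}(\mathbb{C}))=\mathrm{Re}(\underline{K}(\mathrm{Perf}(\mathbb{C})))$, which is the realization of the presheaf $\mathrm{Spec}R\mapsto K(R)$. By the first part of the preceding theorem, this is $K^\mathrm{sst}(\mathrm{Spec}\,\mathbb{C})$. I would use the Friedlander--Walker computation $K^\mathrm{sst}(\mathrm{Spec}\,\mathbb{C})\simeq \mathrm{ku}$. Concretely, the group completion of $\coprod_n \mathrm{Gr}_n(\mathbb{C}^\infty)$ realizes to $\mathbb{Z}\times BU$: the realization replaces the algebraic Grassmannians by their analytic spaces, and Segal's theorem then gives connective $\mathrm{ku}$. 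Inverting $\beta$ yields $\mathrm{KU}$.

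\emph{Schemes (2).} I would first treat smooth $X$. By the preceding theorem $K^\mathrm{st}(\mathrm{Perf}(X))\simeq K^\mathrm{sst}(X)$, and Friedlander--Walker show that $K^\mathrm{sst}(X)[\beta^{-1}]\simeq KU(X^\mathrm{an})$ for smooth quasi-projective $X$.

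For general separated $X$ of finite type I would argue by descent. Both sides satisfy Zariski (Mayer--Vietoris) descent. For the left side this follows from localization (3) applied to the Thomason--Trobaugh sequences for $\mathrm{Perf}$; for the right side it is classical. This reduces the problem to affine $X$. Both sides are then to be shown nil-invariant and cdh-descent. Granting that, resolution of singularities and abstract blow-up squares reduce the comparison to the smooth case, by noetherian induction on dimension.

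For the left side I would use an arithmetic fracture square. Modulo $n$, the preceding theorem gives $K^\mathrm{top}/n\simeq KH/n[\beta^{-1}]$, and $KH$ satisfies cdh descent. Rationally, I would identify $K^\mathrm{top}\otimes\mathbb{Q}$ with periodic cyclic homology via a topological Chern character, and $HP$ satisfies cdh descent by Corti\~nas--Haesemeyer--Weibel. On the topological side, cdh squares give homotopy pushouts of analytic spaces, hence Mayer--Vietoris for $KU$.

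The main obstacle is exactly this integral cdh descent for $K^\mathrm{top}$, and specifically the rational identification with $HP$ together with its compatibility with the map to $KU(X^\mathrm{an})$. I would try first to establish the rational statement on smooth affines, where the topological side is known, and then extend it by the same descent.
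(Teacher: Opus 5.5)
The paper gives no argument for this statement; it is quoted from Blanc. Your treatment of (3) and (1) is a reasonable reconstruction of the standard route. So is (2) for smooth $X$, via $K^{\mathrm{st}}(\mathrm{Perf}(X))\simeq K^{\mathrm{sst}}(X)$, Friedlander--Walker on smooth quasi-projective varieties, and Zariski descent from Thomason--Trobaugh plus (3). The mod $n$ half of your fracture square is also fine.

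\textbf{The gap.} The problem is the rational half of the fracture square in the singular case of (2).

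\begin{itemize}
\item $K^{\mathrm{top}}\otimes\mathbb{Q}$ cannot be identified with $HP$. Periodic cyclic homology over $\mathbb{C}$ is a $\mathbb{C}$-module, and at most a Chern character gives $K^{\mathrm{top}}\wedge H\mathbb{C}\simeq HP$, which is the lattice conjecture.
\item More decisively, for singular $X$ that equivalence is not available independently. For $\mathrm{Perf}(X)$ it is obtained in the literature by combining (2) with cdh descent for $K^{\mathrm{top}}(\mathrm{Perf}(-))$ and for $HP$, so invoking it here is circular.
\item Your fallback, proving the rational comparison on smooth affines and extending it ``by the same descent'', presupposes exactly the cdh descent of $K^{\mathrm{top}}\otimes\mathbb{Q}$ that the fracture square was meant to supply.
\end{itemize}
As written, you never obtain cdh descent for the left-hand side, so the induction via resolution of singularities cannot be run.

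\textbf{The missing idea.} You need neither a fracture square nor $HP$: descent can be imported integrally at the level of presheaves.

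\begin{itemize}
\item As recalled in the paper, $K^{\mathrm{st}}(\mathcal{A})\simeq\mathrm{Re}(\mathrm{Spec}\,R\mapsto KH(\mathcal{A}\otimes_{\mathbb{C}}\mathrm{Perf}(R)))$, because $\mathrm{Re}$ inverts $\mathbb{A}^1$-homotopies. Also $\mathrm{Perf}(X)\otimes_{\mathbb{C}}\mathrm{Perf}(R)\simeq\mathrm{Perf}(X\times\mathrm{Spec}\,R)$.
\item An abstract blow-up square of finite type $\mathbb{C}$-schemes remains one after base change along $\mathrm{Spec}\,R\to\mathrm{Spec}\,\mathbb{C}$. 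Haesemeyer's cdh descent for $KH$, which also gives nil-invariance, therefore yields a square of presheaves of spectra that is cartesian objectwise.
\item $\mathrm{Re}$ is exact, and inverting $\beta$ is a sequential colimit; both commute with finite limits. Hence $K^{\mathrm{top}}(\mathrm{Perf}(-))$ is nil-invariant and satisfies cdh descent integrally.
\end{itemize}

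One further piece is left implicit in your sketch but is needed by any descent argument. You must construct a comparison map $K^{\mathrm{top}}(\mathrm{Perf}(X))\to KU(X^{\mathrm{an}})$ that is natural in all separated finite type $X$. For affine $X$ you can take it adjoint to the evaluation $\mathrm{Re}(\underline{K}(\mathrm{Perf}(X)))\wedge\Sigma^\infty_+X^{\mathrm{an}}\to\mathrm{Re}(\underline{K}(\mathrm{Perf}(\mathbb{C})))$ coming from monoidality of $\mathrm{Re}$, then invert $\beta$, then extend by Zariski descent. With this map and the presheaf-level descent above in place of the fracture square, your reduction to the smooth case goes through.
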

\section{BGQ spectral sequence}
t-structure is firstly introduced by \cite[Definition 1.3.1]{Deligne_1982}. \cite[Definition 1.2.1.1]{lurie_2017} gives the definition of t-structure on stable $\infty$-categories with homological indexing.
\begin{definition}
A \emph{$t$-structure} on a stable $\infty$-category $\mathcal{C}$ consists of a pair of full subcategories $\mathcal{C}_{\ge 0} \subseteq E$ and $\mathcal{C}_{\le 0} \subseteq \mathcal{C}$ satisfying the following conditions:
\begin{enumerate}
    \item $\mathcal{C}_{\ge 0}[1] \subseteq \mathcal{C}_{\ge 0}$ and $\mathcal{C}_{\le 0} \subseteq \mathcal{C}_{\le 0}[1]$;
    \item if $x \in \mathcal{C}_{\ge 0}$ and $y \in \mathcal{C}_{\le 0}$, then $\operatorname{Hom}_\mathcal{C}(x, y[-1]) = 0$;
    \item every $x \in \mathcal{C}$ fits into a cofiber sequence $\tau_{\ge 0}x \to x \to \tau_{\le -1}x$ where $\tau_{\ge 0}x \in \mathcal{C}_{\ge 0}$ and $\tau_{\le -1}x \in \mathcal{C}_{\le 0}[-1]$.
\end{enumerate}
\end{definition}
If $\mathcal{D}$ is another stable category with a $t$-structure $(\mathcal{D}_{\ge 0}, \mathcal{D}_{\le 0})$, then an exact functor $F : \mathcal{C} \to \mathcal{D}$ is called \emph{left $t$-exact} if $F(\mathcal{C}_{\le 0}) \subset \mathcal{D}_{\le 0}$, and \emph{right $t$-exact} if $F(\mathcal{C}_{\ge 0}) \subset \mathcal{D}_{\ge 0}$. Further, $F$ is called \emph{$t$-exact} if it is both left and right $t$-exact.

Given a $t$-structure $(\mathcal{C}_{\ge 0}, \mathcal{C}_{\le 0})$ on a stable category $\mathcal{C}$, for any $a \in \mathbb{Z}$ we put
\[
\mathcal{C}_{\ge a} = \mathcal{C}_{\ge 0}[a], \quad \mathcal{C}_{\le a} = \mathcal{C}_{\le 0}[a].
\]
For integers $a \leq b$ we put $\mathcal{C}_{[a,b]} = \mathcal{C}_{\ge a} \cap \mathcal{C}_{\le b}$. In particular, $\mathcal{C}_{[0,0]} = \mathcal{C}^\heartsuit$ is the heart of the $t$-structure, which is an abelian category. We recall the notation
\[
\mathcal{C}^+ = \bigcup_{a \in \mathbb{Z}} \mathcal{C}_{\le a}, \quad \mathcal{C}^- = \bigcup_{a \in \mathbb{Z}} \mathcal{C}_{\ge a}, \quad \mathcal{C}^b = \mathcal{C}^+ \cap \mathcal{C}^- = \bigcup_{n \ge 0} \mathcal{C}_{[-n,n]}.
\]
The $t$-structure $(\mathcal{C}_{\ge 0}, \mathcal{C}_{\le 0})$ is called \emph{left bounded} resp. \emph{right bounded} resp. \emph{bounded} if $\mathcal{C} = \mathcal{C}^+$ resp. $\mathcal{C} = \mathcal{C}^-$ resp. $\mathcal{C} = \mathcal{C}^b$. We use the notation from \cite{efimov2026}:
\begin{definition}
    A small \emph{$t$-category} is a small stable category $\mathcal{C}$ with a bounded $t$-structure $(\mathcal{C}_{\ge 0}, \mathcal{C}_{\le 0})$.
\end{definition}
\begin{definition}
    An $t$-exact functor $F:\mathcal{C}\to\mathcal{D}$ between small $t$-categories is called \emph{coconnective} if
        \begin{itemize}
            \item $F(\mathcal{C})$ generates $\mathcal{D}$ as a stable idempotent-complete $\infty$-categories.
            \item $F|_{\mathcal{C}^\heartsuit}:\mathcal{C}^\heartsuit\to\mathcal{D}^\heartsuit$ is fully faithful.
        \end{itemize}
\end{definition}

\begin{definition}
    An additive invariant $\Theta:\mathrm{Cat}^\mathrm{perf}_k\to\mathrm{Sp}$ is called {$t$-invariant} if for every coconnective $t$-exact functor $F$ between small $t$-categories, $\Theta(F)$ is an equivalence.
\end{definition}
The the following result is just from \cite[Theorem 4,Corollary 1]{Quillen_1973}
\begin{prop}\label{prop:tinvariant}
    Let $\Theta:\mathrm{Cat}^\mathrm{perf}_k\to\mathrm{Sp}$ be a $t$-invariant additive invariant. Let $A$ be a $k$-linear abelian category such that every object has finite length. Then
    \[
    \Theta_i(A)\simeq \coprod_{j\in J}\Theta_i(D_j),
    \]
    where $\{X_j,j\in J\}$ is a set of representatives for the isomorphism classes of simple objects of $A$, and $D_j$ is the sfield $\mathrm{End}(X_j)^\mathrm{op}$.
\end{prop}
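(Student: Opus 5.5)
Since $\Theta$ preserves filtered colimits, I will first reduce to the case where $A$ is generated by finitely many objects. Then I will mimic Quillen's dévissage argument: compare $D^b(A)$ with the semisimple abelian subcategory $A_{ss}$ of semisimple objects, and use additivity to split $\Theta(D^b(A_{ss}))$ into a sum over the simples.

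First, write $A$ as the filtered union of its full abelian subcategories $A_\alpha$ generated under extensions and subquotients by finitely many objects. Each $A_\alpha$ is again a $k$-linear abelian category in which every object has finite length, and each $A_\alpha$ has only finitely many simple objects, namely the composition factors of its generators. Then $D^b(A)=\mathrm{colim}_\alpha D^b(A_\alpha)$ in $\mathrm{Cat}^\mathrm{perf}_k$, because every bounded complex and every morphism involves finitely many objects. Since $\Theta$ preserves filtered colimits, and the direct sum on the right-hand side is the corresponding filtered colimit over finite subsets of $J$, it suffices to treat $A$ with finitely many simples $X_1,\dots,X_r$.

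Second, let $A_{ss}\subset A$ be the full subcategory of semisimple objects. It is abelian and equivalent to $\prod_{j}\mathrm{Mod}^{\mathrm{fd}}(D_j)$, with $X_j$ corresponding to $D_j$ via $\mathrm{Hom}(X_j,-)$. The inclusion $D^b(A_{ss})\to D^b(A)$ is $t$-exact for the standard $t$-structures. On hearts it is the fully faithful inclusion $A_{ss}\subset A$. Its image generates $D^b(A)$ as a stable idempotent-complete category: every object of $D^b(A)$ is an iterated extension of shifts of objects of $A$, and every object of $A$ is an iterated extension of simples via a composition series. Hence the inclusion is coconnective, and $t$-invariance gives $\Theta(D^b(A_{ss}))\simeq\Theta(D^b(A))$. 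This step replaces Quillen's dévissage theorem, and it is where the hypothesis of finite length is used.

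Third, $D^b(A_{ss})\simeq\prod_{j=1}^r D^b(\mathrm{Mod}^{\mathrm{fd}}(D_j))\simeq\prod_j\mathrm{Perf}(D_j)$. A finite product is built from split exact sequences $\mathcal{C}\to\mathcal{C}\times\mathcal{C}'\to\mathcal{C}'$, so additivity gives $\Theta(\prod_j\mathrm{Perf}(D_j))\simeq\bigoplus_j\Theta(D_j)$. Taking $\pi_i$ yields the claim.

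The main obstacle is verifying that $D^b(A_{ss})\to D^b(A)$ is actually a morphism in $\mathrm{Cat}^\mathrm{perf}_k$ realized by an exact functor. In particular, one must check that the derived functor of the exact inclusion $A_{ss}\to A$ is compatible with the $\infty$-categorical enhancements, since $\mathrm{Ext}$ groups differ between the two categories. This is fine because coconnectivity only requires full faithfulness on hearts, not on all of $D^b$. One must also check that $\mathrm{Perf}(D_j)$ is $k$-linear, which holds because $D_j$ is a $k$-algebra.
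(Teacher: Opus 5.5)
Your proof is correct and follows the paper's route: $t$-invariance applied to the coconnective inclusion $D^b(A_{ss})\to D^b(A)$, compatibility of $\Theta$ with filtered colimits and finite products, and the identification of the semisimple part with $\prod_j \mathrm{mod}(D_j)$. The only difference is the order: you reduce to finitely many simples before invoking $t$-invariance, so you only need finite products, which additivity genuinely provides, whereas the paper first passes to semisimple objects and then appeals to ``direct products'' and filtered colimits together.
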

\begin{proof}
    Let $B$ be the full subcategory of all semi-simple objects in $A$. Since $\Theta$ is $t$-invariant, $\Theta_i(B)\simeq\Theta_i(A)$. So we reduce to the case where every objects of $A$ is semi-simple. Since $\Theta$ commutes with direct product and filtered colimits, we reduce to the case where $A$ has a single simple object $X$ up to isomorphism. In this case, $A$ is equivalent to the abelian category of f.g. modules of $\mathrm{End}(X)^\mathrm{op}$.
\end{proof}
The  Brown-Gersten-Quillen(BGQ) spectral sequence is just from \cite[Theorem 7.5.4]{Quillen_1973}.
\begin{thm}\label{thm:BGQ}
    Let $\Theta:\mathrm{Cat}^\mathrm{perf}_k\to\mathrm{Sp}$ be a $t$-invariant localizing invariant. Let $X$ is a separated noetherian scheme over $k$. There is a spectra sequence
    \[
    E_1^{p,q}=\coprod_{x\in X^p}\Theta_{-p-q}(\kappa(x))\implies \Theta_{-n}(\mathrm{coh}(X))
    \]
    which is convergent when $X$ has finite dimension. Here, $X^p$ is the set of points of codimension $p$ in $X$ and $\kappa(x)$ is the residue field at $x$.
\end{thm}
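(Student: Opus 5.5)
We will follow Quillen's original construction of the BGQ spectral sequence, with his dévissage and localization theorems replaced by the hypotheses that $\Theta$ is $t$-invariant and localizing. The plan is to build an exact couple from the filtration of $\mathrm{coh}(X)$ by codimension of support.

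\textbf{Step 1: the filtration.} For $p\geq 0$ let $\mathrm{coh}_p(X)\subseteq\mathrm{coh}(X)$ be the Serre subcategory of coherent sheaves whose support has codimension $\geq p$. Let $D^b_p:=D^b_{\mathrm{coh}_p(X)}(\mathrm{coh}(X))$ denote the complexes with cohomology in $\mathrm{coh}_p(X)$. These form a decreasing chain of thick subcategories of $D^b(\mathrm{coh}(X))$. Each quotient $D^b_p/D^b_{p+1}$ is identified, after idempotent completion, with $D^b(\mathrm{coh}_p(X)/\mathrm{coh}_{p+1}(X))$; this is the standard result on bounded derived categories of Serre quotients of noetherian abelian categories. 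Since $\Theta$ is localizing, we obtain cofiber sequences
\[
\Theta(D^b_{p+1})\to\Theta(D^b_p)\to\Theta(\mathrm{coh}_p(X)/\mathrm{coh}_{p+1}(X)).
\]

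\textbf{Step 2: $t$-invariance identifies the terms.} The natural $t$-exact functor $D^b(\mathrm{coh}_p(X))\to D^b_p$ is coconnective: it is fully faithful on hearts, and its image generates because every complex in $D^b_p$ is an iterated extension of its cohomology sheaves. Hence $\Theta(D^b_p)\simeq\Theta(\mathrm{coh}_p(X))$.

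For the graded pieces we use the decomposition
\[
\mathrm{coh}_p(X)/\mathrm{coh}_{p+1}(X)\simeq\bigoplus_{x\in X^p}\mathcal{M}_x,
\]
where $\mathcal{M}_x$ is the category of finite-length $\mathcal{O}_{X,x}$-modules; this is Quillen's argument via localization at generic points. Here $\Theta$ commutes with this sum because it preserves filtered colimits and finite sums, the latter since it is additive. Every object of $\mathcal{M}_x$ has finite length, and its unique simple object is $\kappa(x)$. Proposition \ref{prop:tinvariant}, which uses $t$-invariance, therefore gives $\Theta(\mathcal{M}_x)\simeq\Theta(\kappa(x))$ (with $\mathrm{End}(\kappa(x))^{\mathrm{op}}=\kappa(x)$).

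\textbf{Step 3: the spectral sequence.} The cofiber sequences above form an exact couple. After the usual cohomological reindexing this gives $E_1^{p,q}=\coprod_{x\in X^p}\Theta_{-p-q}(\kappa(x))$, abutting to $\Theta_{-n}(\mathrm{coh}(X))$ since $D^b_0=D^b(\mathrm{coh}(X))$. When $\dim X<\infty$ the filtration is finite, $D^b_p=0$ for $p>\dim X$, so convergence is automatic.

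\textbf{Expected obstacle.} The main obstacle is the categorical identification in Step 1: showing that the Verdier quotient $D^b_p/D^b_{p+1}$ agrees with $D^b$ of the abelian Serre quotient, compatibly with $t$-structures. This is what allows Step 2 to apply $t$-invariance to the graded pieces. We would invoke Keller's (or Schlichting's) theorem on derived categories of Serre quotients, checking that $\mathrm{coh}_{p+1}(X)$ is a Serre subcategory of the noetherian category $\mathrm{coh}_p(X)$. The localization-at-points decomposition of the quotient is then classical.
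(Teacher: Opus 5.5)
Your proposal is correct and is essentially the paper's proof: filter by codimension of support $\mathrm{coh}_p(X)$, take the localization cofiber sequences, decompose $\mathrm{coh}_p(X)/\mathrm{coh}_{p+1}(X)$ over $x\in X^p$ and apply Proposition~\ref{prop:tinvariant}, form the exact couple, and get convergence from $\mathrm{coh}_p(X)=0$ for $p>\dim X$. Working inside $D^b(\mathrm{coh}(X))$ and using $t$-invariance to compare $D^b(\mathrm{coh}_p(X))$ with $D^b_p$ is more careful than the paper, which simply asserts the long exact sequence; note, though, that the Serre-quotient theorem you cite computes $D^b(\mathrm{coh}_p(X))/D^b_{\mathrm{coh}_{p+1}}(\mathrm{coh}_p(X))$ rather than $D^b_p/D^b_{p+1}$, so you need one more step: either use Artin--Rees to get $D^b(\mathrm{coh}_p(X))\simeq D^b_p$, or localize $D^b(\mathrm{coh}_p(X))$ directly and apply $t$-invariance again to $D^b(\mathrm{coh}_{p+1}(X))\to D^b_{\mathrm{coh}_{p+1}}(\mathrm{coh}_p(X))$.
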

\begin{proof}
    Let $\mathrm{coh}_p(X)$ be the Serre subcategory of $\mathrm{coh}(X)$ consisting of those coherent sheaves whose support is of codimension $\geq p$. There is an equivalence
    \[
    \mathrm{coh}_p(X)/\mathrm{coh}_{p+1}(X)\simeq\coprod_{x\in X^p}\bigcup_n\mathrm{mod}(\mathcal{O}_{X,x}/\mathrm{rad}(\mathcal{O}_{X,x})^n)
    \]
    By Proposition \ref{prop:tinvariant}, we have
    \[
    \Theta_i(\mathrm{coh}_p(X)/\mathrm{coh}_{p+1}(X))\simeq\coprod_{x\in X^p}\Theta_i(\kappa(x)).
    \]
    Since $X$ is localizing invariant, we have long exact sequence:
    \[
    \to\Theta_i(\mathrm{coh}_{p+1}(X))\to\Theta_i(\mathrm{coh}_p(X))\to \coprod_{x\in X^p}\Theta_i(\kappa(x))\to \Theta_{i-1}(\mathrm{coh}_{p+1}(X))\to
    \]
    which give rise to the desired spectra sequence in a standard way.
\end{proof}
\begin{cor}
    Let $X$ is a separated noetherian scheme over $\mathbb{C}$ that has finite dimension and $m\ge 1$ be an integer. There are convergent spectra sequences
    \begin{align}
        E_1^{p,q}(\mathrm{st})&=\coprod_{x\in X^p}K^\mathrm{st}_{-p-q}(\kappa(x),\mathbb{Z}/m\mathbb{Z})\implies K^\mathrm{st}_{-n}(\mathrm{coh}(X),\mathbb{Z}/m\mathbb{Z})\\
        E_1^{p,q}(\mathrm{top})&=\coprod_{x\in X^p}K^\mathrm{top}_{-p-q}(\kappa(x),\mathbb{Z}/m\mathbb{Z})\implies K^\mathrm{top}_{-n}(\mathrm{coh}(X),\mathbb{Z}/m\mathbb{Z})
    \end{align}
\end{cor}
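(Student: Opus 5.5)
The plan is to obtain both spectral sequences from Theorem \ref{thm:BGQ}, applied to the mod-$m$ invariants $\Theta=K^\mathrm{st}/m$ and $\Theta=K^\mathrm{top}/m$. Here $\Theta/m$ denotes the cofiber of multiplication by $m$, so that $\Theta_i(-)/m$ has homotopy groups $\Theta_i(-,\mathbb{Z}/m\mathbb{Z})$. Theorem \ref{thm:BGQ} requires a $t$-invariant localizing invariant $\mathrm{Cat}^\mathrm{perf}_\mathbb{C}\to\mathrm{Sp}$, so the proof reduces to verifying these two properties for the two functors. The conclusion, including convergence, then follows verbatim from Theorem \ref{thm:BGQ}, since $X$ is separated, noetherian and finite dimensional. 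One also uses that $\kappa(x)$ stands for $D^b(\mathrm{mod}\,\kappa(x))\simeq\mathrm{Perf}(\kappa(x))$.

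\emph{Localizing.} $K^\mathrm{st}$ is a localizing invariant, as stated in Section 2, and $K^\mathrm{top}$ is localizing by the cited theorem of Blanc. Cofiber of multiplication by $m$ is an exact functor $\mathrm{Sp}\to\mathrm{Sp}$, so it preserves cofiber sequences, zero objects and filtered colimits. Hence $K^\mathrm{st}/m$ and $K^\mathrm{top}/m$ are again localizing invariants.

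\emph{$t$-invariance.} This is the main obstacle. For $K^\mathrm{st}/m$ I would use $K^\mathrm{st}/m\simeq K/m$ from the cited theorem of Antieau--Heller. This reduces the claim to $t$-invariance of $K$ itself, which is Quillen's dévissage, \cite[Theorem 4]{Quillen_1973}, combined with the theorem of the heart (Barwick), which identifies $K$ of a small $t$-category with $K$ of its heart. Indeed, a coconnective $t$-exact functor $F:\mathcal{C}\to\mathcal{D}$ restricts to a fully faithful exact embedding of hearts. The generation hypothesis, together with boundedness, should imply that every object of $\mathcal{D}^\heartsuit$ has a finite filtration with subquotients in $F(\mathcal{C}^\heartsuit)$. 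I expect this step to need the most care: I would argue it through the cohomology objects of iterated cones of objects from $F(\mathcal{C})$, or else restrict to the Serre-subcategory situations actually used in Proposition \ref{prop:tinvariant} and Theorem \ref{thm:BGQ}, where the filtration is evident. Dévissage then makes $K(F)$ an equivalence, and therefore so is $K/m(F)$.

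For $K^\mathrm{top}/m$, write $K^\mathrm{top}/m=(K^\mathrm{st}/m)[\beta^{-1}]$, a filtered colimit along multiplication by the Bott element $\beta$. This is functorial in $\mathcal{A}$, because $K^\mathrm{st}(\mathcal{A})$ is a $\mathrm{bu}$-module naturally in $\mathcal{A}$ via the realization. A natural transformation that is an equivalence before inverting $\beta$ remains one after taking the colimit, so $t$-invariance of $K^\mathrm{st}/m$ gives $t$-invariance of $K^\mathrm{top}/m$. Applying Theorem \ref{thm:BGQ} to each functor yields the two convergent spectral sequences with the stated $E_1$-terms.
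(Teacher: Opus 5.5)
Your route differs from the paper's at the one substantive step, the $t$-invariance of $K^\mathrm{st}/m$. The paper uses the other half of the Antieau--Heller comparison, $K^\mathrm{st}/m\simeq KH/m$, and cites Efimov for the $t$-invariance of $KH$. That gives invariance under \emph{every} coconnective functor, so Theorem~\ref{thm:BGQ} applies as a black box. You instead pass to $K/m$ and try to derive $t$-invariance of $K$ from the theorem of the heart plus Quillen's d\'evissage. In the generality of the paper's definition this does not go through as sketched.

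\begin{itemize}
\item Generation and boundedness only show the following: the objects of $\mathcal{D}$ whose cohomology lies in the Serre closure of $F(\mathcal{C}^\heartsuit)$ form a thick subcategory. Hence every object of $\mathcal{D}^\heartsuit$ has a finite filtration whose graded pieces are \emph{subquotients} of objects of $F(\mathcal{C}^\heartsuit)$, not objects of $F(\mathcal{C}^\heartsuit)$ itself.
\item Quillen's d\'evissage also needs the essential image of $F^\heartsuit$ to be closed under subobjects and quotients. Nothing in the definition of a coconnective functor provides this. Already for $\mathcal{C}=\mathrm{Perf}(\mathbb{C})$ with its standard $t$-structure, the image is $\mathrm{add}(M)$ for some $M\in\mathcal{D}^\heartsuit$ with $\mathrm{End}(M)=\mathbb{C}$, and d\'evissage would require $\mathrm{add}(M)$ to be closed under subobjects.
\end{itemize}

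So make your fallback the actual argument. Re-run the proof of Theorem~\ref{thm:BGQ} rather than quoting it, and check invariance only for the functors it uses:
\begin{itemize}
\item $D^b(B)\to D^b(A)$, with $B$ the semisimple objects of a finite-length $A$;
\item for the localization sequences, $D^b(\mathrm{coh}_{p+1}(X))\to D^b_{\mathrm{coh}_{p+1}}(\mathrm{coh}_p(X))$, unless you simply quote that this is an equivalence.
\end{itemize}
In both cases the image of the heart is closed under subquotients, and d\'evissage applies.

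Also note that Barwick's theorem of the heart concerns connective $K$, while the localizing invariant $K$ here is nonconnective. For the noetherian abelian categories that occur, negative $K$-groups vanish (Schlichting), and $K(D^b(A))\simeq K(A)$ by Gillet--Waldhausen. This is harmless but should be stated.

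With these adjustments your proof is complete and uses only classical inputs rather than a recent general theorem. The price is that you establish only the special cases of $t$-invariance needed to build the two spectral sequences, not $t$-invariance of $K^\mathrm{st}/m$ and $K^\mathrm{top}/m$ in the sense the paper asserts. Your treatment of the localizing property and of inverting $\beta$ matches the paper and is fine.
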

\begin{proof}
    By \cite{Antieau_Heller_2017}, $K^\mathrm{st}/m\simeq KH/m$. By \cite{efimov2026}, $KH$ is $t$-invariant. Hence, $K^\mathrm{st}/m$ is $t$-invariant. Since $K^\mathrm{top}=K^\mathrm{st}[\beta^{-1}]$, $K^\mathrm{top}/m$ is also $t$-invariant.
\end{proof}
\section{Quillen-Lichtenbaum conjecture for singular schemes}
The following theorem is just a consequence of Quillen-Lichtenbaum conjecture in smooth case.
\begin{thm}\label{thm:QLforfield}
    Let $F$ be a finitely generated field extension over $\mathbb{C}$ and $m\geq 1$ be an integer. Then the canonical map
    \[
    K^{\mathrm{st}}_i(F, \mathbb{Z}/m\mathbb{Z}) \to K^{\mathrm{top}}_i(F, \mathbb{Z}/m\mathbb{Z})
    \]
    is an isomorphism for $i \ge \operatorname{trdeg}_{\mathbb{C}}(F) - 1$ and a monomorphism for $i = \operatorname{trdeg}_{\mathbb{C}}(F) - 2$, where $\operatorname{trdeg}_\mathbb{C}(F)$ is its transcendence degree over $\mathbb{C}$.
\end{thm}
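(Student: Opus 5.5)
Write $F=\operatorname{colim}_\alpha \mathcal{O}(U_\alpha)$ as a filtered colimit of coordinate rings of smooth affine $\mathbb{C}$-varieties of dimension $d=\operatorname{trdeg}_\mathbb{C}(F)$. Concretely, choose an integral affine model $\operatorname{Spec} A$ of finite type with fraction field $F$ and shrink it to its smooth locus. The $U_\alpha$ then range over the nonempty affine open subschemes, ordered by reverse inclusion. Since $\mathrm{Perf}(F)\simeq \operatorname{colim}_\alpha \mathrm{Perf}(U_\alpha)$ in $\mathrm{Cat}^\mathrm{perf}_\mathbb{C}$, and $K^\mathrm{st}$ and $K^\mathrm{top}$ are localizing invariants (in particular they preserve filtered colimits), both sides of the comparison map become filtered colimits. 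Reduction mod $m$ and taking homotopy groups commute with filtered colimits, so
\[
K^\mathrm{st}_i(F,\mathbb{Z}/m\mathbb{Z})\cong\operatorname{colim}_\alpha K^\mathrm{st}_i(\mathrm{Perf}(U_\alpha),\mathbb{Z}/m\mathbb{Z}),\qquad K^\mathrm{top}_i(F,\mathbb{Z}/m\mathbb{Z})\cong\operatorname{colim}_\alpha K^\mathrm{top}_i(\mathrm{Perf}(U_\alpha),\mathbb{Z}/m\mathbb{Z}),
\]
compatibly with the comparison map.

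For each $U_\alpha$, which is smooth, separated and of finite type of dimension $d$, apply the two theorems cited above. By \cite{Antieau_Heller_2017}, $K^\mathrm{st}/m\simeq K/m$, so the source is $K_i(U_\alpha,\mathbb{Z}/m\mathbb{Z})$. By \cite{blanc_2015}, $K^\mathrm{top}(\mathrm{Perf}(U_\alpha))\simeq KU(U_\alpha^{\mathrm{an}})$, so the target is $KU^{-i}(U_\alpha,\mathbb{Z}/m\mathbb{Z})$. The classical Quillen--Lichtenbaum theorem then says the comparison map is an isomorphism for $i\ge d-1$ and injective for $i=d-2$. Filtered colimits of abelian groups are exact, so a colimit of isomorphisms is an isomorphism and a colimit of injections is an injection. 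This gives the claimed ranges for $F$.

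The main obstacle is naturality: I must check that the map $K^\mathrm{st}\to K^\mathrm{top}=K^\mathrm{st}[\beta^{-1}]$, once transported through these identifications, is the classical comparison map $K(X)/m\to KU(X^\mathrm{an})/m$ to which the Quillen--Lichtenbaum theorem applies. If this is not available directly from the references, I would argue as follows:
\begin{itemize}
\item the identifications are natural in $X$;
\item the comparison map is the unique map of ring spectra compatible with the realization, through the semi-topological factorization $K\to K^\mathrm{sst}\to KU$ of \cite{Friedlander_Haesemeyer_Walker_2004};
\item on the semi-topological side, Antieau--Heller identify $K^\mathrm{st}(\mathrm{Perf}(X))\simeq K^\mathrm{sst}(X)$.
\end{itemize}
A smaller point to verify is that the colimit presentation of $\mathrm{Perf}(F)$ in $\mathrm{Cat}^\mathrm{perf}_\mathbb{C}$ holds. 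This follows from $\mathrm{Perf}$ of a filtered colimit of rings being the colimit of the $\mathrm{Perf}$'s.
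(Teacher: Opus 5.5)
Your proposal is correct and takes essentially the same route as the paper. You write $F$ as the filtered colimit of coordinate rings of smooth affine opens of a $d$-dimensional model, pass $K^{\mathrm{st}}$ and $K^{\mathrm{top}}$ through the colimit, identify each term with (mod $m$ algebraic, equivalently semi-topological) K-theory and with $KU(U_\alpha^{\mathrm{an}})$, and conclude from the smooth Quillen--Lichtenbaum theorem and exactness of filtered colimits. The only differences are that you use $K/m$ rather than $K^{\mathrm{sst}}$ on the source side, and that you explicitly flag the compatibility of the comparison maps, which the paper leaves implicit.
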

\begin{proof}
    For a finitely generated field extension $F$ over $\mathbb{C}$, we can view $F$ as the function field of a smooth separated scheme $X$ of finite type with $\dim X=\operatorname{trdeg}_{\mathbb{C}}(F)$. Geometrically, $\operatorname{Spec} F$ is the inverse limit of all non-empty Zariski open subsets $U_\alpha \subset X$. Algebraically, $F$ is the filtered colimit of their rings of functions:
    \[
    F = \operatorname{colim} \mathcal{O}_X(U_\alpha)
    \]
    Hence,
    \[
    \begin{aligned}
        K^\mathrm{st}(F)&\simeq\operatorname{colim}K^\mathrm{st}(U_\alpha)\simeq\operatorname{colim}K^\mathrm{sst}(U_\alpha)\\
        K^\mathrm{top}(F)&\simeq\operatorname{colim}K^\mathrm{top}(U_\alpha)\simeq \operatorname{colim}KU(U_\alpha)
    \end{aligned}
    \]
    Since $U_\alpha$ is smooth affine separated scheme of finite type with $\dim U_\alpha=\operatorname{trdeg}_{\mathbb{C}}(F)$, we get desired results from Quillen-Lichtenbaum conjecture in smooth case.
\end{proof}
\begin{thm}\label{thm:QLforscheme}
Let $X$ is a separated scheme over $\mathbb{C}$ of finite type and $m\geq 1$ be an integer. Then the canonical map
    \[
    K^\mathrm{st}_n(\mathrm{coh}(X),\mathbb{Z}/m\mathbb{Z})\to K^{\mathrm{top}}_n(\mathrm{coh}(X),\mathbb{Z}/m\mathbb{Z})
    \]
    is an isomorphism for $n\geq\dim X-1$ and a monomorphism for $n=\dim X-2$.
\end{thm}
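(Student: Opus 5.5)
We compare the two BGQ spectral sequences of the Corollary (for $K^\mathrm{st}/m$ and $K^\mathrm{top}/m$) through the natural map $K^\mathrm{st}\to K^\mathrm{top}=K^\mathrm{st}[\beta^{-1}]$. This map is a natural transformation of $t$-invariant localizing invariants after reduction mod $m$. Hence it induces maps between the long exact sequences of the filtration $\mathrm{coh}_{p+1}(X)\subset\mathrm{coh}_p(X)$ used in the proof of Theorem \ref{thm:BGQ}, and therefore a morphism of spectral sequences $E_r^{p,q}(\mathrm{st})\to E_r^{p,q}(\mathrm{top})$, compatible with the abutments. Write $d=\dim X$. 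Both spectral sequences are concentrated in $0\le p\le d$ and converge, since $X$ is of finite type, hence noetherian of finite dimension.

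Rather than comparing the spectral sequences page by page, we run an induction on $p$ using the filtration directly, which makes the degree bookkeeping cleaner. A point $x\in X^p$ has residue field $\kappa(x)$ finitely generated over $\mathbb{C}$. We would like $\operatorname{trdeg}_\mathbb{C}\kappa(x)=d-p$. For a finite type scheme one only has $\operatorname{trdeg}\kappa(x)=\dim\overline{\{x\}}\le d-p$, with equality when $X$ is equidimensional and catenary. To avoid this issue we instead filter by dimension of support: let $\mathrm{coh}_{\le q}(X)$ consist of sheaves with support of dimension $\le q$. The same argument as in Theorem \ref{thm:BGQ}, via Proposition \ref{prop:tinvariant}, gives cofiber sequences whose quotient terms are $\coprod_{\dim\overline{\{x\}}=q}\Theta(\kappa(x))$ with $\operatorname{trdeg}\kappa(x)=q$. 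By Theorem \ref{thm:QLforfield}, for such $x$ the map
\[
K^\mathrm{st}_i(\kappa(x),\mathbb{Z}/m\mathbb{Z})\to K^\mathrm{top}_i(\kappa(x),\mathbb{Z}/m\mathbb{Z})
\]
is an isomorphism for $i\ge q-1$ and a monomorphism for $i=q-2$. This property passes to arbitrary coproducts.

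We now prove by induction on $q$ the claim: for $\mathrm{coh}_{\le q}(X)$ the comparison map is an isomorphism for $n\ge q-1$ and a monomorphism for $n=q-2$. The case $q=0$ is Theorem \ref{thm:QLforfield} for $\kappa(x)=\mathbb{C}$ (in fact an isomorphism in all degrees $\ge -1$). For the inductive step, consider the map of long exact sequences
\[
\cdots\to A_n\to B_n\to C_n\to A_{n-1}\to\cdots
\]
where $A=\mathrm{coh}_{\le q-1}$, $B=\mathrm{coh}_{\le q}$, and $C$ is the coproduct over points of dimension $q$. On $C$ the map is an isomorphism for $n\ge q-1$ and a monomorphism at $q-2$. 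On $A$, by induction, it is an isomorphism for $n\ge q-2$ and a monomorphism at $q-3$, which is stronger than needed. For $B_n$ with $n\ge q-1$, the five lemma applies to $A_n\to B_n\to C_n\to A_{n-1}$ preceded by $C_{n+1}$: here $C_{n+1}$ and $C_n$ are isomorphisms, $A_n$ and $A_{n-1}$ are isomorphisms (as $n-1\ge q-2$), so $B_n$ is an isomorphism. For $n=q-2$, the four lemma for monomorphisms needs $C_{n+1}=C_{q-1}$ surjective, $A_n=A_{q-2}$ injective, and $C_n=C_{q-2}$ injective, all of which hold. So $B_{q-2}$ is a monomorphism. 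Setting $q=d$ gives $\mathrm{coh}_{\le d}(X)=\mathrm{coh}(X)$ and the theorem.

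The main obstacle is the justification that the dimension-of-support filtration yields the same kind of localization sequence as in Theorem \ref{thm:BGQ}. This requires that $\mathrm{coh}_{\le q}$ be Serre subcategories, which holds, and the quotient identification $\mathrm{coh}_{\le q}/\mathrm{coh}_{\le q-1}\simeq\coprod_x\bigcup_n\mathrm{mod}(\mathcal{O}_{X,x}/\mathfrak m^n)$. It also requires that $K^\mathrm{st}/m$ and $K^\mathrm{top}/m$ commute with these infinite coproducts and filtered colimits, and that the comparison map is compatible with the boundary maps. The last point holds because everything is induced by a natural transformation of localizing invariants. These are the points I would check carefully, following \cite{Quillen_1973}.
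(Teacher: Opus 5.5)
Your proof is correct. It follows the same strategy as the paper: filter $\mathrm{coh}(X)$ by supports, use d\'evissage to reduce to residue fields, apply Theorem \ref{thm:QLforfield}, then compare. It differs from the paper in two choices.

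\textbf{What the paper does.} It uses the codimension filtration and the BGQ spectral sequence. It asserts $\operatorname{trdeg}_\mathbb{C}\kappa(x)=d-p$ for $x\in X^p$, obtains the $E_1$-comparison (isomorphism for $q\le 1-d$, monomorphism for $q=2-d$), and then simply cites the comparison theorem.

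\textbf{What you do instead.} You filter by dimension of support, so every residue field in the $q$-th graded piece has transcendence degree exactly $q$. You then replace the spectral-sequence comparison by an explicit five-lemma/four-lemma induction. This is the comparison theorem unwound for a finite filtration, with the degree bookkeeping made visible, and your bookkeeping checks out.

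\textbf{On the equality $\operatorname{trdeg}_\mathbb{C}\kappa(x)=d-p$.} Your worry is justified: the equality fails for non-equidimensional $X$. For example, in $X=\mathbb{A}^2\sqcup\operatorname{Spec}\mathbb{C}$ the isolated point lies in $X^0$ but has transcendence degree $0\neq 2$. The paper's route nevertheless survives. One always has $\operatorname{trdeg}_\mathbb{C}\kappa(x)=\dim\overline{\{x\}}\le d-p$, and the property ``isomorphism for $i\ge t-1$, monomorphism for $i=t-2$'' for a bound $t$ implies the same property for every $t'\ge t$. So your change of filtration is a gain in cleanliness rather than a necessity. It does also give you a sharper intermediate bound for each $\mathrm{coh}_{\le q}(X)$, in terms of the dimension of supports rather than $\dim X$.

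\textbf{Shared inputs.} Both arguments rest on the same foundational inputs: $t$-invariance of $K^\mathrm{st}/m$ and $K^\mathrm{top}/m$, the localization sequences for these Serre subcategories, and compatibility with infinite coproducts. You rightly single these out as the points to verify.
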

\begin{proof}
Assume $d=\dim X$. For a point $x \in X^p$, the residue field $F = \kappa(x)$ is a finitely generated field extension of $\mathbb{C}$. Its transcendence degree over $\mathbb{C}$ is exactly the dimension of its closure:
\[
\operatorname{trdeg}_{\mathbb{C}}(F) = d - p.
\]
Hence, by Theorem \ref{thm:QLforfield},  
    \[
    K^{\mathrm{st}}_i(F, \mathbb{Z}/m\mathbb{Z}) \to K^{\mathrm{top}}_i(F, \mathbb{Z}/m\mathbb{Z})
    \]
    is an isomorphism for $i \ge \operatorname{trdeg}_{\mathbb{C}}(F) - 1=d-p-1$ and a monomorphism for $i = \operatorname{trdeg}_{\mathbb{C}}(F) - d-p-2$. Hence, the map $f_1^{p,q}: E_1^{p,q}(\mathrm{st}) \to E_1^{p,q}(\mathrm{top})$ is an isomorphism when $-p-q \ge d - p - 1 \implies q \le 1 - d$, and a monomorphism when $-p-q = d - p - 2 \implies q = 2 - d$.

    The remaining proof is just from the Comparison Theorem of spectra sequences.
\end{proof}
\begin{remark}
    Since $K/m\simeq K^\mathrm{st}/m$ by \cite{Antieau_Heller_2017}, this theorem is just the Theorem \ref{mainresult}.
\end{remark}
\begin{conj}\label{conj:sQLforscheme}
Let $X$ is a separated scheme over $\mathbb{C}$ of finite type and $A$ be an abelian group. Then the canonical map
    \[
    K^\mathrm{st}_n(\mathrm{coh}(X),A)\to K^{\mathrm{top}}_n(\mathrm{coh}(X),A)
    \]
    is an isomorphism for $n\geq\dim X-1$ and a monomorphism for $n=\dim X-2$.
\end{conj}
\begin{conj}\label{conj:sQLforfield}
    Let $F$ be a finitely generated field extension over $\mathbb{C}$ and $A$ be an abelian group. Then the canonical map
    \[
    K^{\mathrm{st}}_i(F, A) \to K^{\mathrm{top}}_i(F, A)
    \]
    is an isomorphism for $i \ge \operatorname{trdeg}_{\mathbb{C}}(F) - 1$ and a monomorphism for $i = \operatorname{trdeg}_{\mathbb{C}}(F) - 2$. 
\end{conj}
\begin{conj}\label{conj:iinvariantforst}
    $K^\mathrm{st}$ is $t$-invariant.
\end{conj}
\begin{thm}\label{thm:sQLforscheme}
     Conjecture \ref{conj:sQLforfield} and Conjecture \ref{conj:iinvariantforst} imply Conjecture \ref{conj:sQLforscheme}.
\end{thm}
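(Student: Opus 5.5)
The plan is to run the argument of Theorem \ref{thm:QLforscheme} again, with coefficients in an arbitrary abelian group $A$ in place of $\mathbb{Z}/m\mathbb{Z}$. Write $K^\mathrm{st}(-,A):=K^\mathrm{st}\otimes \mathbb{S}A$, where $\mathbb{S}A$ is the Moore spectrum. Since smashing with $\mathbb{S}A$ is exact and commutes with colimits, $K^\mathrm{st}(-,A)$ is again a localizing invariant. By Conjecture \ref{conj:iinvariantforst} it is also $t$-invariant: a coconnective $t$-exact functor induces an equivalence on $K^\mathrm{st}$, hence on $K^\mathrm{st}\otimes\mathbb{S}A$.

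For $K^\mathrm{top}(-,A)=K^\mathrm{st}[\beta^{-1}]\otimes\mathbb{S}A$ I would argue as follows. Bott inversion is the colimit of the tower $K^\mathrm{st}\xrightarrow{\beta}\Sigma^{-2}K^\mathrm{st}\xrightarrow{\beta}\cdots$. This is a filtered colimit of $t$-invariant localizing invariants, so $K^\mathrm{top}(-,A)$ is also $t$-invariant and localizing. The Bott map $\beta$ is natural in $\mathcal{A}$ because $K^\mathrm{st}$ is a module over $K^\mathrm{st}(\mathbb{C})\simeq \mathrm{bu}$, which is the same point the paper uses in the corollary after Theorem \ref{thm:BGQ}.

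Next, Theorem \ref{thm:BGQ} gives two convergent spectral sequences, since $X$ is of finite type and hence finite-dimensional:
\[
E_1^{p,q}(\mathrm{st},A)=\coprod_{x\in X^p}K^\mathrm{st}_{-p-q}(\kappa(x),A)\implies K^\mathrm{st}_{-p-q}(\mathrm{coh}(X),A),
\]
and similarly for $\mathrm{top}$. The natural transformation $K^\mathrm{st}\to K^\mathrm{top}$ is compatible with the localization sequences for the filtration by $\mathrm{coh}_p(X)$. Together with Proposition \ref{prop:tinvariant}, this gives a morphism of spectral sequences $f_r^{p,q}$. At this point one should check that the identification of the graded pieces with $\coprod_x\Theta(\kappa(x))$ is natural in $\Theta$; it is, since it is built from a $t$-exact functor and filtered colimits.

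For $x\in X^p$ we have $\operatorname{trdeg}_\mathbb{C}\kappa(x)=d-p$ with $d=\dim X$, since $X$ is of finite type and, after passing to irreducible components, the codimension satisfies $\dim\overline{\{x\}}=d-p$. One has to be careful here: if $X$ is not equidimensional, one only gets $\dim\overline{\{x\}}\le d-p$, which only helps, since smaller transcendence degree enlarges the good range. Conjecture \ref{conj:sQLforfield} then shows that $f_1^{p,q}$ is an isomorphism for $q\le 1-d$ and a monomorphism for $q=2-d$, exactly as in the proof of Theorem \ref{thm:QLforscheme}.

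The main obstacle is the final comparison step, which needs care at the edge degree. I would carry it out by induction on the number of filtration steps $p=d,d-1,\dots,0$ rather than by an abstract comparison theorem. The statement to prove by induction is that $K^\mathrm{st}_n(\mathrm{coh}_p(X),A)\to K^\mathrm{top}_n(\mathrm{coh}_p(X),A)$ is an isomorphism for $n\ge d-p-1$ and injective for $n=d-p-2$; here $\mathrm{coh}_p(X)$ has dimension at most $d-p$. The base case $p=d$ is the direct sum over closed points, handled by the field case with $\operatorname{trdeg}=0$. For the inductive step, apply the five lemma to the map of long exact sequences
\[
\cdots\to\Theta_n(\mathrm{coh}_{p+1})\to\Theta_n(\mathrm{coh}_p)\to\coprod_{x\in X^p}\Theta_n(\kappa(x))\to\Theta_{n-1}(\mathrm{coh}_{p+1})\to\cdots
\]
For the isomorphism in degree $n\ge d-p-1$:

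\begin{itemize}
\item the outer term $\Theta_{n-1}(\mathrm{coh}_{p+1})$ needs injectivity in degree $n-1\ge d-p-2$;
\item the outer term $\Theta_n(\mathrm{coh}_{p+1})$ needs surjectivity in degree $n\ge d-p-1$;
\item both hold by induction, since $d-(p+1)-1=d-p-2$.
\end{itemize}

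For injectivity in degree $n=d-p-2$, the four-lemma variant requires:

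\begin{itemize}
\item surjectivity on $\Theta_{n+1}$ of the field term;
\item surjectivity on $\Theta_n(\mathrm{coh}_{p+1})$, which holds by induction since $n=d-p-2$ is in the isomorphism range for $\mathrm{coh}_{p+1}$;
\item injectivity on the field term in degree $n$, which is the monomorphism part of Conjecture \ref{conj:sQLforfield}.
\end{itemize}

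The case $p=0$ gives the claim.
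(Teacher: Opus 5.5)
Your proposal is correct and follows the paper's route. The paper's proof is simply ``the same as Theorem \ref{thm:QLforscheme}'': BGQ spectral sequences for the $t$-invariant localizing invariants $K^\mathrm{st}(-,A)$ and $K^\mathrm{top}(-,A)$, the $E_1$-comparison from the field conjecture, and a comparison theorem. Your finite induction over the coniveau filtration with the five lemma is an explicit version of that last step, and your observation that only $\operatorname{trdeg}_\mathbb{C}\kappa(x)\le d-p$ holds for non-equidimensional $X$ (which suffices) correctly refines the paper's claimed equality.

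The only slip is bookkeeping in the lemma applications. The map on $\Theta_n(\mathrm{coh}_{p+1})$ must be injective (and, in the isomorphism case, also surjective), not merely surjective. In the isomorphism case you also need the field map bijective in degree $n$ and surjective in degree $n+1$. All of this holds: your inductive hypothesis gives isomorphisms for $\mathrm{coh}_{p+1}$ in every degree $\ge d-p-2$, and Conjecture \ref{conj:sQLforfield} gives isomorphisms on the field terms in every degree $\ge d-p-1$.
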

\begin{proof}
    The proof is the same as Theorem \ref{thm:QLforscheme}.
\end{proof}
\section{Quillen-Lichtenbaum conjecture of $\mathbb{C}$-linear stable $\infty$-categories}
\begin{definition}
    Let $\mathcal{A}$ be a $\mathbb{C}$-linear stable $\infty$-category. Define the \emph{Quillen-Lichtenbaum dimension} by the minimal integer $d\geq 0$ such that for all $m\geq 1$, $K_n(\mathcal{A},\mathbb{Z}/m\mathbb{Z})\to K^\mathrm{top}_n(\mathcal{A},\mathbb{Z}/m\mathbb{Z})$ is an isomorphism for $n\geq d-1$ and a monomorphism for $n=d-2$. Denote it by $\dim_\mathrm{QL}\mathcal{A}$.
\end{definition}
Theorem \ref{thm:QLforscheme} tells us $\dim_\mathrm{QL}(D^b_\mathrm{coh}(X))\leq\dim X$.
\begin{question}
    For a $\mathbb{C}$-linear stable $\infty$-category, do we have $\dim_\mathrm{QL}\mathcal{A}<\infty$?
\end{question}
We suspect at least for smooth $\mathbb{C}$-linear stable $\infty$-category, its Quillen-Lichtenbaum dimension is finite.
\begin{conj}
    Let $\mathcal{A}$ be a smooth $\mathbb{C}$-linear stable $\infty$-category. Then $\dim_\mathrm{QL}\mathcal{A}<\infty$.
\end{conj}
\begin{definition}
    Let $\mathcal{A}$ be a $\mathbb{C}$-linear stable $\infty$-category. Define the \emph{semi-topological Quillen-Lichtenbaum dimension} by the minimal integer $d\geq 0$ such that $K^\mathrm{st}_n(\mathcal{A})\to K^\mathrm{top}_n(\mathcal{A})$ is an isomorphism for $n\geq d-1$ and a monomorphism for $n=d-2$. Denote it by $\dim_\mathrm{sQL}\mathcal{A}$.
\end{definition}
It is obvious that $\dim_\mathrm{sQL}\mathcal{A}\geq \dim_\mathrm{QL}\mathcal{A}$.
\begin{conj}
    Let $\mathcal{A}$ be a smooth $\mathbb{C}$-linear stable $\infty$-category. Then $\dim_\mathrm{sQL}\mathcal{A}<\infty$.
\end{conj}
To support our conjecture, we find some examples.
\begin{prop}
    If $\mathcal{A}$ is in one of the following cases, then $\dim_\mathrm{QL}(\mathcal{A})<\infty$:
    \begin{enumerate}[label=(\arabic*)]
        \item $\mathcal{A}=\mathrm{Perf}(\mathbb{C}[G])$ for a finite group $G$;
        \item $\mathcal{A}=\mathrm{Perf}(\mathfrak{X})$, where $\mathfrak{X}/\mathbb{C}$ is a derived scheme such that its classical part $\pi_0\mathfrak{X}$ is a smooth separated scheme of finite type.
    \end{enumerate}
\end{prop}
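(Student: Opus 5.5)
Both cases reduce, via invariance properties of $K/m$ and $K^\mathrm{top}/m$, to categories whose Quillen--Lichtenbaum dimension is already controlled: $\mathrm{Perf}(\mathbb{C})$ in case (1), and the smooth classical scheme $\pi_0\mathfrak{X}$ in case (2).

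\emph{Case (1).} Since $G$ is finite and $\mathbb{C}$ has characteristic zero, Maschke's theorem and Artin--Wedderburn give
\[
\mathbb{C}[G]\cong\prod_{j=1}^r M_{n_j}(\mathbb{C}).
\]
Hence $\mathrm{Perf}(\mathbb{C}[G])\simeq\prod_{j=1}^r\mathrm{Perf}(M_{n_j}(\mathbb{C}))$. Morita invariance in $\mathrm{Cat}^\mathrm{perf}_\mathbb{C}$ gives $\mathrm{Perf}(M_{n}(\mathbb{C}))\simeq\mathrm{Perf}(\mathbb{C})$, so $\mathrm{Perf}(\mathbb{C}[G])\simeq\mathrm{Perf}(\mathbb{C})^{\times r}$. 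Both $K$ and $K^\mathrm{top}$ are additive, so they carry finite products to finite products compatibly with the comparison map. We are therefore reduced to $\mathrm{Perf}(\mathbb{C})=\mathrm{Perf}(\operatorname{Spec}\mathbb{C})$. There Theorem \ref{thm:QLforscheme}, with $\dim=0$ and using $K/m\simeq K^\mathrm{st}/m$, gives isomorphisms for $n\ge -1$. Negative $K$-groups of $\mathbb{C}$ vanish, and so do those of $K^\mathrm{top}$, since $KU_n$ vanishes for odd $n$. Alternatively, Suslin rigidity gives $K_n(\mathbb{C};\mathbb{Z}/m)\cong\pi_n(\mathrm{ku}/m)$, which matches $KU_n/m$ in nonnegative degrees. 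Either way $\dim_\mathrm{QL}(\mathrm{Perf}(\mathbb{C}[G]))=0$.

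\emph{Case (2).} Let $\mathfrak{X}\to\pi_0\mathfrak{X}=:X$ be the canonical truncation; I will treat the derived structure as nilpotent data. For $K/m$, the Dundas--Goodwillie--McCarthy theorem, combined with the vanishing of $TC/m$ relative terms in characteristic zero, shows that $K/m$ is nil-invariant. Equivalently, since $KH$ is truncating by Land--Tamme, $K^\mathrm{inf}$ is rational, and $K/m\simeq KH/m$, we get $K(\mathfrak{X})/m\simeq K(X)/m$. For $K^\mathrm{top}$, Blanc's construction depends on $\underline{K}(\mathcal{A})$ only after realization. Because $K^\mathrm{st}/m\simeq KH/m$ and $KH$ is truncating, we have $KH(\mathrm{Perf}(\mathfrak{X})\otimes\mathrm{Perf}(R))\simeq KH(\mathrm{Perf}(X\times\operatorname{Spec}R))$ functorially in $R$, so $K^\mathrm{top}(\mathfrak{X})/m\simeq K^\mathrm{top}(X)/m$. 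This step needs that base change of the truncation to $\operatorname{Spec} R$ is again a nilpotent extension of $\pi_0$; that holds because $\pi_0(\mathfrak{X}\times\operatorname{Spec}R)=X\times\operatorname{Spec}R$. The comparison map is natural, so the statement for $\mathfrak{X}$ is equivalent to the statement for $X$. Since $X$ is smooth separated of finite type, the smooth Quillen--Lichtenbaum theorem (or Theorem \ref{thm:QLforscheme}, since $\mathrm{coh}(X)$ and $\mathrm{Perf}(X)$ agree for smooth $X$) gives $\dim_\mathrm{QL}\mathrm{Perf}(\mathfrak{X})\le\dim X<\infty$.

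The main obstacle is the nil-invariance of $K^\mathrm{top}/m$ in case (2). One has to make sure the truncating property of $KH$ passes through $\mathcal{A}\otimes_\mathbb{C}\mathrm{Perf}(R)$ and the realization functor $\mathrm{Re}$. I would first check that Land--Tamme truncating invariance applies to connective $\mathbb{C}$-algebras, which are the local pieces of $\mathfrak{X}\times\operatorname{Spec}R$. I would then use Zariski descent of $KH$ to globalize, noting that the affine-local statement suffices because $\mathrm{Re}$ commutes with colimits.
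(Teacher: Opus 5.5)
Your proposal is correct and follows the paper's proof. In (1), Artin--Wedderburn plus additivity reduces to $\mathrm{Perf}(\mathbb{C})$; in (2), nil-invariance of $K/m$ and $K^{\mathrm{top}}$ reduces to the smooth scheme $\pi_0\mathfrak{X}$. The only difference is that the paper cites Konovalov's Proposition 6.5 for this nil-invariance, whereas you derive it from the truncating property of $KH$ and $K^{\mathrm{st}}\simeq\mathrm{Re}(\underline{KH})$.

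One side remark is false but harmless: $K^{\mathrm{top}}_{-2}(\mathbb{C})=KU_{-2}\cong\mathbb{Z}$ does not vanish. What you actually need is $\pi_{-1}(KU/m)=0$ for the isomorphism at $n=-1$, and $K_{-2}(\mathbb{C};\mathbb{Z}/m)=0$ for injectivity at $n=-2$.
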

\begin{proof}\quad
\begin{enumerate}[label=(\arabic*)]
    \item The group algebra of a finite group is
    \[
    \mathbb{C}[G] \cong \bigoplus_{i=1}^k M_{d_i}(\mathbb{C})
    \]
    where:$k$ is the number of conjugacy classes of $G$ (which is also the number of distinct irreducible representations), $M_{d_i}(\mathbb{C})$ is the ring of $d_i \times d_i$ matrices with complex entries and $d_i$ is the dimension of the $i$-th irreducible representation. Hence, $K(\mathbb{C}[G])\simeq K(\mathbb{C})^k,K^\mathrm{top}(\mathbb{C}[G])\simeq K^\mathrm{top}(\mathbb{C}[G])$. Thus,
    \[
    \dim_\mathrm{QL}\mathrm{Perf}(\mathbb{C}[G])=0<\infty.
    \]
    \item By \cite[Proposition 6.5]{konovalov2021}, $K^\mathrm{st}(\mathrm{Perf}(\mathfrak{X}))\simeq K^\mathrm{st}(\pi_0\mathfrak{X}),K^\mathrm{top}(\mathrm{Perf}(\mathfrak{X}))\simeq K^\mathrm{top}(\pi_0\mathfrak{X})$. Hence,
    \[
    \dim_\mathrm{QL}\mathrm{Perf}(\mathfrak{X})=\dim_\mathrm{QL}\mathrm{Perf}(\pi_0\mathfrak{X})\leq\dim\pi_0\mathfrak{X}<\infty.
    \]
\end{enumerate}
\end{proof}
\bibliography{refbase}

\end{document}